\theoremstyle{definition}
\newtheorem{proof_strategy}[equation]{Proof Strategy}
\newtheorem{linear_overview}[equation]{Linear Overview}
\newtheorem{conventions}[equation]{Conventions}
\newcommand{\operad}{\xspace{$\infty$-op\-er\-ad}\xspace}
\newcommand{\operads}{\xspace{$\infty$-op\-er\-ads}\xspace}
\newcommand{\procategories}{\xspace{pro-$\infty$-cat\-e\-gories}\xspace}
\newcommand{\Sigmaprofinite}{\xspace{$\Sigma$-pro\-fi\-nite}\xspace}
\newcommand{\Sigmacompletion}{\xspace{$\Sigma$-com\-ple\-tion}\xspace}
\newcommand{\Bprofin}{\Bup\profincomp}
\newcommand{\Btrun}{\Bup_{<\infty}}
\newcommand{\protrun}{\trun_{<\infty}}
\newcommand{\profincomp}{_{\uppi}^{\wedge}}
\renewcommand{\Pihat}{\widehat{\Pi}}
\newcommand{\Piet}{\Pi_{\infty}^{\et}}
\newcommand{\Pietprotrun}{\Pi_{<\infty}^{\et}}
\DeclareMathOperator{\qcqs}{qcqs}
\newcommand{\Schqcqs}{\Sch^{\qcqs}}
\newcommand{\Xbullet}{X_{\bullet}}
\newcommand{\Ybullet}{Y_{\bullet}}
\newcommand{\Shapeprotrun}{\Pi_{<\infty}}
\renewcommand{\Setfin}{\Set_{\fin}}
\newcommand{\Catfin}{\Cat_{\infty,\uppi}}
\newcommand{\ProCatfin}{\Pro(\Catfin)}
\DeclareMathOperator{\Catob}{Cat}
\newcommand{\Cattrun}{\Cat_{<\infty}}
\newcommand{\ProCattrun}{\Pro(\Cattrun)}
\newcommand{\ProSpc}{\Pro(\Spc)}
\newcommand{\Spcfin}{\Space_{\uppi}}
\newcommand{\Spctrun}{\Space_{<\infty}}
\newcommand{\ProSpcfin}{\Pro(\Spcfin)}
\newcommand{\ProSpctrun}{\Pro(\Spctrun)}
\newcommand{\SpcSigma}{\Spc_{\Sigma}}
\newcommand{\ProSpcSigma}{\Pro(\SpcSigma)}
\newcommand{\Sigmacomp}{_{\Sigma}^{\wedge}}
\title{\Large Profinite completions of products}
\author{\normalsize Peter J. Haine}
\date{\normalsize \today}
\begin{document}

\maketitle


\begin{abstract} 
	A source of difficulty in profinite homotopy theory is that the profinite completion functor does not preserve finite products.
	In this note, we provide a new, checkable criterion on prospaces $ X $ and $ Y $ that guarantees that the profinite completion of $ X \cross Y $ agrees with the product of the profinite completions of $ X $ and $ Y $.
	Using this criterion, we show that profinite completion preserves products of étale homotopy types of qcqs schemes.
	This fills a gap in Chough's proof of the Künneth formula for the étale homotopy type of a product of proper schemes over a separably closed field.
\end{abstract}

\tableofcontents


\setcounter{section}{-1}

\section{Introduction}

Write $ \Spcfin $ for the \category of \pifinite spaces.
Given a set $ \Sigma $ of primes, write $ \SpcSigma \subset \Spcfin $ for the full subcategory spanned by those \pifinite spaces whose homotopy groups have orders divisible only by primes in $ \Sigma $.
Write 
\begin{equation*}
	(-)\Sigmacomp \colon \fromto{\ProSpc}{\ProSpcSigma}
\end{equation*}
for the \Sigmacompletion functor, i.e., the left adjoint to the inclusion $ \ProSpcSigma \subset \ProSpc $.
One source of difficulty in profinite homotopy theory is that the \Sigmacompletion functor does not preserve finite limits, or even finite products (see \cites[\SAGthm{Remark}{E.5.2.6}]{SAG}[Remark 3.10]{MR3921321}).

As far as we are aware, given connected spaces $ X $ and $ Y $, the only general condition to check that the profinite completion of $ X \cross Y $ is the product of the profinite completions of $ X $ and $ Y $ is to check that the homotopy groups of $ X $ and $ Y $ are \textit{good} in the sense of Serre \cite[Proposition 3.9]{MR3921321}.
However, it is generally quite difficult to check if a group is good, and there are hard conjectures about whether or not certain groups are good.
For example, Deligne and Morava's conjecture that the mapping class group $ \Gamma_{g,n} $ of a genus $ g $ curve with $ n $ marked points is good \cite[Problem on p. 94]{MR1483111} is still open.

The purpose of this note is to provide a new, checkable criterion on prospaces $ X $ and $ Y $ that guarantees that the natural map $ \fromto{(X \cross Y)\Sigmacomp}{X\Sigmacomp \cross Y\Sigmacomp} $ is an equivalence.
Since the statement of the this criterion requires introducing a bit of terminology, in this introduction we state the two applications that motivated our general results.
For the precise statements of the general results, see \Cref{cor:localization_preserves_products_of_objects_with_iterated_resolutions,thm:profinite_completion_preserves_products_admitting_resolutions_by_simplicial_profinite_spaces}.

The first application is that if one is already in the setting of profinite homotopy theory, then \Sigmacompletion preserves products:

\begin{proposition}[(\Cref{cor:Sigma-completion_preserves_products})]\label{intro_prop:Sigma-completion_preserves_products}
	Let $ \Sigma $ be a set of primes.
	Then the $ \Sigma $-completion functor restricted to profinite spaces
	\begin{equation*}
		(-)\Sigmacomp \colon \fromto{\ProSpcfin}{\ProSpcSigma}
	\end{equation*}
	preserves products.
\end{proposition}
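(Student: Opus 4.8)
The plan is to apply the general criterion \Cref{cor:localization_preserves_products_of_objects_with_iterated_resolutions} to the \Sigmacompletion $ (-)\Sigmacomp \colon \fromto{\ProSpc}{\ProSpcSigma} $ itself. The inclusion $ \ProSpcfin \hookrightarrow \ProSpc $ is fully faithful and preserves finite products --- products in both pro-categories are computed levelwise along cofiltered presentations, and $ \Spcfin $ is closed under finite products in $ \Spc $ --- so for $ X, Y \in \ProSpcfin $ the product $ X \cross Y $ may be formed in $ \ProSpc $, where it again lies in $ \ProSpcfin $. It therefore suffices to verify the hypothesis of \Cref{cor:localization_preserves_products_of_objects_with_iterated_resolutions}, namely that $ X $ and $ Y $ admit iterated resolutions. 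But this is automatic: an object of $ \ProSpcfin $ is a profinite space, and so trivially admits an iterated resolution --- it is its own, via the constant simplicial diagram on it. The criterion then shows that the natural map $ \fromto{(X \cross Y)\Sigmacomp}{X\Sigmacomp \cross Y\Sigmacomp} $ is an equivalence; since $ (-)\Sigmacomp $ visibly fixes the terminal object (the point is \Sigmafinite), this proves the proposition.

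One can also argue by hand, which is instructive. The \Sigmacompletion of a \pifinite space is again \pifinite (with $ \Sigma $-primary homotopy), so $ (-)\Sigmacomp $ restricts to a functor $ L \colon \fromto{\Spcfin}{\SpcSigma} $ left adjoint to the inclusion; and since $ \ProSpcfin \hookrightarrow \ProSpc $ is fully faithful, the restriction of $ (-)\Sigmacomp $ to $ \ProSpcfin $ is the pro-extension of $ L $ (both are left adjoint to $ \ProSpcSigma \hookrightarrow \ProSpcfin $). The pro-extension of a product-preserving functor is again product-preserving, so it remains to show that $ L $ preserves finite products of \pifinite spaces. After reducing to connected spaces, this is a Postnikov-tower induction, resting on the elementary observations that the maximal $ \Sigma $-quotient of a product of finite groups is the product of their maximal $ \Sigma $-quotients, and likewise for the $ \Sigma $-primary parts of products of the (twisted) higher homotopy modules.

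I expect essentially all of the genuine work to lie in \Cref{cor:localization_preserves_products_of_objects_with_iterated_resolutions} and in \Cref{thm:profinite_completion_preserves_products_admitting_resolutions_by_simplicial_profinite_spaces}, on which it rests; given those, the deduction above is formal. In the hands-on route the main obstacle is the Postnikov induction for $ L $ --- in particular, reconciling \Sigmacompletion with the $ k $-invariants --- which is exactly where the general argument does its real work.
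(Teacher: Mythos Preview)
There is a genuine gap in your main argument. To apply \Cref{cor:localization_preserves_products_of_objects_with_iterated_resolutions} to the localization $L = (-)\Sigmacomp \colon \Ccal \to \Dcal$, the relevant target is $\Dcal = \ProSpcSigma$, and the hypothesis is that $X$ and $Y$ lie in $\Ccal_{\abs{\Dcal}}$ (equivalently, admit $\Dcal$-resolutions in the sense of \Cref{def:D-resolution}). The constant simplicial diagram on a profinite space $X$ is a $\Dcal$-resolution only if $X$ already lies in $\ProSpcSigma$; for a general $X \in \ProSpcfin$ this fails, so the ``trivial resolution'' does not place $X$ in $\Ccal_{\abs{\Dcal}}$. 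There is a second issue: with $\Ccal = \ProSpc$ as you propose, the standing hypothesis of \Cref{ntn:basic_assumptions}---that geometric realizations preserve finite products in $\Ccal$---is not established anywhere in the paper; \Cref{cor:geometric_realizations_preserve_finite_products_protruncated_profinite} proves this only for $\ProSpctrun$ and $\ProSpcSigma$.

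The paper's proof supplies precisely the missing idea. One first reduces, via cofiltered limits, to the case where $X$ and $Y$ are \pifinite. Then one invokes \Cref{lem:finite_spaces_and_almost_pi-finite_spaces_admit_Setfin-resolutions}: every \pifinite space admits a $\Setfin$-resolution. The point is that $\Setfin \subset \SpcSigma$ for \emph{every} $\Sigma$, so a $\Setfin$-resolution is in particular a $\ProSpcSigma$-resolution; now \Cref{cor:localization_preserves_products_of_objects_with_iterated_resolutions} applies with $\Ccal = \ProSpcfin$ and $\Dcal = \ProSpcSigma$, where the product--realization hypothesis is available. Note also that the logical dependence runs opposite to what you suggest: \Cref{thm:profinite_completion_preserves_products_admitting_resolutions_by_simplicial_profinite_spaces} \emph{uses} \Cref{cor:Sigma-completion_preserves_products} in its proof, not the other way around.
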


The second is that in the setting of étale homotopy theory, \Sigmacompletion preserves finite products.
Given a scheme $ X $, write $ \Piet(X) \in \ProSpc $ for the étale homotopy type of $ X $.

\begin{proposition}[(\Cref{ex:profinite_completion_preserves_products_of_etale_homotopy_types})]\label{intro_prop:completions_of_products_of_etale_homotopy_types}
	Let $ \Sigma $ be a set of primes and let $ X $ and $ Y $ be qcqs schemes.
	Then the natural map of profinite spaces
	\begin{equation*}
		\paren{\Piet(X) \cross \Piet(Y)}\Sigmacomp \to \Piet(X)\Sigmacomp \cross \Piet(Y)\Sigmacomp
	\end{equation*}
	is an equivalence.
\end{proposition}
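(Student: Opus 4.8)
The plan is to deduce the statement from \Cref{thm:profinite_completion_preserves_products_admitting_resolutions_by_simplicial_profinite_spaces}: by that result it is enough to show that for every qcqs scheme $ Z $ the prospace $ \Piet(Z) $ admits a resolution by simplicial profinite spaces, i.e., that $ \Piet(Z) $ is the geometric realization, formed in $ \ProSpc $, of some simplicial object valued in $ \ProSpcfin $. Applying this with $ Z = X $ and with $ Z = Y $ and invoking the theorem then yields the asserted equivalence, naturality of the comparison map being part of its conclusion.

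To produce such a resolution I would combine the hypercover description of the étale homotopy type with the well-behaved covers supplied by the pro-étale topology. By a theorem of Bhatt--Scholze, every qcqs scheme $ Z $ admits a pro-étale hypercover $ \Ubullet \to Z $ in which every $ U_n $ is a $ w $-contractible affine scheme. For a $ w $-contractible affine scheme $ W $ the shape of the étale $ \infty $-topos is the profinite set $ \uppi_0^{\et}(W) $; this is exactly where the hypothesis that $ Z $ is qcqs enters, since it forces $ \uppi_0^{\et}(U_n) $ to be a profinite set, so that each $ \Piet(U_n) $ lies in $ \Pro(\Setfin) \subseteq \ProSpcfin $. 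Granting the identification $ \Piet(Z) \simeq \abs{\Piet(\Ubullet)} $ in $ \ProSpc $, the simplicial object $ \Piet(\Ubullet) $ is the sought-after resolution. (One could instead stay inside the étale topology and argue following Artin and Mazur, presenting $ \Piet(Z) $ as the realization in $ \ProSpc $ of the simplicial profinite set $ [n] \mapsto \uppi_0^{\et}(U_n) $ extracted from a cofinal system of étale hypercovers $ \Ubullet \to Z $, each level being profinite for the same reason.)

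The crux --- and the point that fills the gap in Chough's argument --- is the identification in the previous paragraph: that $ \Piet(Z) $, however one prefers to define it (as the shape of the étale $ \infty $-topos of $ Z $, or as the Artin--Mazur pro-object), really is computed by such a geometric realization. I expect this to be the main obstacle. Two features deserve care. First, the étale $ \infty $-topos of a qcqs scheme need not be hypercomplete, so the hypercover must be handled at the level of shapes --- harmless, since the shape functor does not see hypercompletion and carries colimits of $ \infty $-topoi to colimits of prospaces. Second, and more seriously, the Bhatt--Scholze cover is pro-étale rather than étale, so $ \mathrm{Sh}_{\et}(U_n) $ is not an étale localization of $ \mathrm{Sh}_{\et}(Z) $; one must check that the \emph{étale} shape --- in contrast with the pro-étale shape, which genuinely differs from it already for a finite field, where the étale shape is $ \BZZhat $ while the pro-étale one is not --- nonetheless satisfies descent along the cover. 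This amounts to a continuity property of the étale homotopy type along cofiltered limits of qcqs schemes with affine transition maps, upgrading the evident étale descent of $ \Piet $ to pro-étale descent; in the Artin--Mazur incarnation the analogous point is instead commuting the cofiltered limit over hypercovers past the geometric realization. Once either is in place the proposition follows formally, and \Cref{cor:localization_preserves_products_of_objects_with_iterated_resolutions} should give an alternative packaging via ``iterated'' resolutions when one prefers to resolve the levels $ \Piet(U_n) $ rather than to observe directly that they are profinite.
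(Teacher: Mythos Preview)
Your strategy is sound, and in fact the paper sketches precisely the pro-étale hypercover argument you outline in its closing Remark; but one adjustment is needed, and the paper's \emph{primary} proof takes a different route.

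The adjustment: \Cref{thm:profinite_completion_preserves_products_admitting_resolutions_by_simplicial_profinite_spaces} is a statement about objects of $ \ProSpctrun' \subset \ProSpctrun $, not about $ \ProSpc $. So you should first observe that protruncation preserves products and that $ \Sigma $-completion factors through it, reducing to showing that $ \Pietprotrun(Z) $ admits a $ \ProSpcfin $-resolution as a geometric realization computed in $ \ProSpctrun $. This is not merely cosmetic: the descent input you need---that $ \nuupperstar \colon X_{\et} \to X_{\proet} $ is fully faithful---holds only on truncated objects, which is exactly why the paper's Remark establishes $ \Pietprotrun(Z) \simeq |\uppi_0(U_\bullet)| $ in $ \ProSpctrun $ rather than the untruncated identification $ \Piet(Z) \simeq |\Piet(U_\bullet)| $ in $ \ProSpc $ that you write down. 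The latter is not obvious and, happily, not required.

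The paper's main proof avoids pro-étale descent entirely. It invokes \cite[Theorem 10.2.3]{arXiv:1807.03281}, which identifies $ \Shapeprotrun(\X) $ for any spectral \topos $ \X $ (in particular $ X_{\et} $ for $ X $ qcqs) with the protruncated classifying space of an explicit profinite category $ \Pihat_{(\infty,1)}(\X) $. The nerve of this profinite category is then tautologically a simplicial object in $ \ProSpcfin $ whose geometric realization in $ \ProSpctrun $ is $ \Pietprotrun(X) $; see \Cref{cor:presentation_of_classifying_protruncated_spaces_of_profinite_categories,ex:protruncated_shapes_of_spectral_topoi_admit_profinite_resolutions}. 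Your approach trades the Galois-category machinery of \cite{arXiv:1807.03281} for the Bhatt--Scholze input and the descent verification you correctly flag as the crux; the paper's route gives a uniform statement for all spectral \topoi and sidesteps that verification, at the cost of importing heavier stratified-shape technology.
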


\begin{remark}
	\Cref{intro_prop:completions_of_products_of_etale_homotopy_types} fills a gap in Chough's proof of the Künneth formula for the étale homotopy type of a product of proper schemes over a separably closed field \cite[Theorem 5.3]{MR4493612}.
	Chough's proof cites the false claim that profinite completion preserves finite limits.
	However, what Chough actually uses is \Cref{intro_prop:completions_of_products_of_etale_homotopy_types} (with $ \Sigma $ the set of all primes).
	In particular, the conclusion of \cite[Theorem 5.3]{MR4493612} remains valid.

	We also remark that in our work with Holzschuh and Wolf \cite[\S4]{arXiv:2304.00938}, \Cref{intro_prop:completions_of_products_of_etale_homotopy_types} is a key ingredient used to prove other Künneth formulas in étale homotopy theory.
\end{remark}

\begin{proof_strategy}
	\Cref{intro_prop:Sigma-completion_preserves_products,intro_prop:completions_of_products_of_etale_homotopy_types} are consequences of a more general result.
	To explain why this is the case, first note that since \Sigmacompletion preserves cofiltered limits, to prove \Cref{intro_prop:Sigma-completion_preserves_products} it suffices to show that \Sigmacompletion preserves finite products of \pifinite spaces.
	This reduction is useful because \pifinite spaces admit very nice presentations: every \pifinite space can be written as the geometric realization (in $ \Spc $) of a Kan complex with finitely many simplices in each dimension \SAG{Lemma}{E.1.6.5}.

	Similarly, to prove \Cref{intro_prop:completions_of_products_of_etale_homotopy_types} we use that the étale homotopy type of a qcqs scheme admits a nice presentation.
	To see this, the first technical observation is that since protruncation preserves limits \cite[Proposition 3.9]{arXiv:2209.03476} and profinite completion factors through protruncation, it suffices to replace the étale homotopy types by their protruncations.
	Our work with Barwick and Glasman \cite[Theorems 10.2.3 \& 12.5.1]{arXiv:1807.03281} provides a description of the protruncated étale homotopy type as the protruncated classifying space of an explicit profinite category.
	Said differently, the protruncated étale homotopy type can be written as a geometric realization of a simplicial profinite space computed in the larger \category of protruncated spaces (see \Cref{ex:protruncated_shapes_of_spectral_topoi_admit_profinite_resolutions}).

	Hence we're done if we can prove the more general claim that \Sigmacompletion preserves products of protruncated spaces that admit such presentations; this is our main result, see \Cref{thm:profinite_completion_preserves_products_admitting_resolutions_by_simplicial_profinite_spaces}.
	This follows once we know that that geometric realizations preserve finite products in the \categories of protruncated and \Sigmaprofinite spaces (see \Cref{prop:almost_finite_colimits_in_protruncated_spaces_are_universal,cor:almost_finite_colimits_in_pro-L-finite_spaces_are_universal}).
	See \Cref{lem:localization_preserves_products_of_objects_with_resolutions,cor:localization_preserves_products_of_objects_with_iterated_resolutions} for the key categorical results that we use.
\end{proof_strategy}

\begin{linear_overview}
	\Cref{sec:universality_of_colimits} proves that geometric realizations are universal in the \categories of protruncated and profinite spaces.
	In particular, geometric realizations preserve finite products in these \categories.
	\Cref{sec:completions_of_products} proves \Cref{intro_prop:Sigma-completion_preserves_products,intro_prop:completions_of_products_of_etale_homotopy_types}.
	It is immediate from \cite[Theorem 10.2.3]{arXiv:1807.03281} that the protruncated étale homotopy type can be written as the geometric realization of a simplicial profinite space.
	However, for ease of reference we have provided a detailed explanation of this fact in \Cref{app:classifying_prospaces_via_geometric realizations}.
\end{linear_overview}

\begin{conventions}
	Throughout, we use the notational conventions of \cite[\S\S1 \& 3]{arXiv:2209.03476}.
	In an effort to keep this note short, we do not recapitulate them here.
\end{conventions}


\begin{acknowledgments}
	We thank Luciana Basualdo Bonatto, Chang-Yeon Chough, Tim Holz\-schuh, Marcy Robertson, and Sebastian Wolf for many enlightening discussions around the contents of this note.
	We gratefully acknowledge support from the NSF Mathematical Sciences Postdoctoral Research Fellowship under Grant \#DMS-2102957 and a grant from the Simons Foundation (816048, LC). 
\end{acknowledgments}


\section{Universality of colimits}\label{sec:universality_of_colimits}

In this section, we prove that geometric realizations are universal in the \categories of protruncated and \Sigmaprofinite spaces.
We accomplish this by proving a more general fact: colimits over diagrams that can be computed as finite colimits when valued in an $ n $-category (see \Cref{def:almost_finite}) are universal in the \categories of protruncated and \Sigmaprofinite spaces (\Cref{prop:almost_finite_colimits_in_protruncated_spaces_are_universal,cor:almost_finite_colimits_in_pro-L-finite_spaces_are_universal}).

The first observation is that finite colimits are universal in protruncated spaces.

\begin{lemma}\label{lem:finite_colimits_universal_in_ProC}
	Let $ \Ccal $ be \acategory with pullbacks and finite colimits.
	If finite colimits are universal in $ \Ccal $, then finite colimits are universal in $ \Pro(\Ccal) $.
\end{lemma}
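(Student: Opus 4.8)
The plan is to reduce to the hypothesis on $ \Ccal $ by strictifying every diagram in sight over a common small cofiltered index category and then computing the relevant finite limits and colimits in $ \Pro(\Ccal) $ levelwise. I first recall that $ \Pro(\Ccal) $ has pullbacks and finite colimits, and that both are computed, after strictification, levelwise: the pullback (resp.\ finite colimit) of a cofiltered system of pullback squares (resp.\ finite colimit diagrams) in $ \Ccal $ is the system of their pullbacks (resp.\ colimits). This uses that the inclusion $ j \colon \fromto{\Ccal}{\Pro(\Ccal)} $ preserves pullbacks and finite colimits and that cofiltered limits in $ \Pro(\Ccal) $ commute with finite limits.

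Now fix a finite category $ K $, a colimit diagram $ \overline{p} \colon \fromto{K^{\triangleright}}{\Pro(\Ccal)} $ with underlying diagram $ p := \overline{p}|_{K} $ and cone point $ X := \colim_{K} p $, and a morphism $ f \colon \fromto{Y}{X} $; the claim is that the pullback diagram $ p_{Y} \colon \fromto{K}{\Pro(\Ccal)} $, $ k \mapsto p(k) \times_{X} Y $, has colimit $ Y $ via the canonical comparison map. First I would strictify $ p $: there is a small cofiltered category $ A $ and a functor $ q \colon \fromto{A \times K}{\Ccal} $ with $ p \simeq \lim_{\alpha \in A} j \circ q(\alpha, -) $. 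By the levelwise formula for finite colimits, $ X \simeq \lim_{\alpha} j(X_{\alpha}) $ where $ X_{\alpha} := \colim_{K} q(\alpha, -) $, and the colimit cone $ \overline{p} $ is obtained levelwise from the colimit cones of the diagrams $ q(\alpha, -) \colon \fromto{K^{\triangleright}}{\Ccal} $. Passing to a cofinal refinement of $ A $ and using that each $ j(X_{\alpha}) $ is cocompact in $ \Pro(\Ccal) $, I would then also write $ Y \simeq \lim_{\alpha} j(Y_{\alpha}) $ and $ f \simeq \lim_{\alpha} j(f_{\alpha}) $ for a compatible family of maps $ f_{\alpha} \colon \fromto{Y_{\alpha}}{X_{\alpha}} $ in $ \Ccal $.

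With everything strictified over $ A $, I would compute. Since $ j $ preserves pullbacks and cofiltered limits commute with finite limits, the base change is levelwise: $ p(k) \times_{X} Y \simeq \lim_{\alpha} j\bigl( q(\alpha, k) \times_{X_{\alpha}} Y_{\alpha} \bigr) $, naturally in $ k \in K $. Applying the levelwise formula for finite colimits once more,
\[ \colim_{K} p_{Y} \;\simeq\; \lim_{\alpha} j\Bigl( \colim_{K}\bigl( q(\alpha, -) \times_{X_{\alpha}} Y_{\alpha} \bigr) \Bigr) . \]
For each $ \alpha $, the hypothesis that finite colimits are universal in $ \Ccal $ — applied to the colimit diagram $ q(\alpha, -) $ and the map $ f_{\alpha} \colon \fromto{Y_{\alpha}}{X_{\alpha}} $ — says precisely that $ \colim_{K}\bigl( q(\alpha, -) \times_{X_{\alpha}} Y_{\alpha} \bigr) \simeq Y_{\alpha} $, compatibly in $ \alpha $. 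Hence $ \colim_{K} p_{Y} \simeq \lim_{\alpha} j(Y_{\alpha}) \simeq Y $, and chasing the identifications shows this equivalence is the canonical comparison map.

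The main obstacle is the bookkeeping in the simultaneous strictification: arranging that, over a single cofiltered category, the colimit cone in $ \Pro(\Ccal) $ is represented by a cofiltered system of honest colimit cones in $ \Ccal $ and the map $ f $ by an honest levelwise map. This, together with the levelwise formulas for finite limits and finite colimits in $ \Pro(\Ccal) $, consists of standard facts about pro-categories (available in the references whose conventions we follow); granting them, the only nonformal input is the single application of universality of finite colimits in $ \Ccal $.
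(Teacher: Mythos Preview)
Your proposal is correct and follows essentially the same route as the paper: the paper's proof is a one-liner citing (the dual of) \HTT{Proposition}{5.3.5.15} for the fact that pullbacks, pushouts, and finite coproducts in $\Pro(\Ccal)$ are computed levelwise, and then observes that the claim follows; your argument is a careful unpacking of exactly that sentence, including the strictification bookkeeping that the citation is meant to absorb.
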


\begin{proof}
	By (the dual of) \HTT{Proposition}{5.3.5.15}, pullbacks, pushouts, and finite coproducts are computed `levelwise' in $ \Pro(\Ccal) $.
	Thus the assumption that finite colimits are universal in $ \Ccal $ implies the claim.
\end{proof}

\begin{example}\label{ex:finite_colimits_are_universal_in_ProSpc}
	Finite colimits are universal in $ \Pro(\Spc) $.
	For each integer $ n \geq 0 $, finite colimits are universal in $ \Pro(\Spc_{\leq n}) $.
\end{example}

\begin{recollection}
	A localization $ L \colon \fromto{\Ccal}{\Dcal} $ is \defn{locally cartesian} if for any cospan $ X \to Z \ot Y $ such that $ X,Z \in \Dcal $, the natural map $ \fromto{L(X \cross_Z Y)}{X \cross_Z L(Y)} $ is an equivalence.
\end{recollection}

\begin{example}[{\cite[Proposition 3.18]{arXiv:2209.03476}}]
	For any set $ \Sigma $ of primes, the localization
	\begin{equation*}
		(-)\Sigmacomp \colon \fromto{\ProSpctrun}{\ProSpcSigma}
	\end{equation*}
	is locally cartesian.
	However, $ (-)\Sigmacomp $ does not generally preserve finite products.
\end{example}

\noindent The following is immediate from the definitions:

\begin{lemma}\label{lem:locally_cartesian_localizations_preserve_universality_of_colimits}
	Let $ \Ical $ be \acategory, $ \Ccal $ \acategory with pullbacks and $ \Ical $-shaped colimits, and let $ L \colon \fromto{\Ccal}{\Dcal} $ be a locally cartesian localization.
	If $ \Ical $-shaped colimits are universal in $ \Ccal $, then $ \Ical $-shaped colimits are universal in $ \Dcal $.
\end{lemma}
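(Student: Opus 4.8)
The plan is to deduce this from universality of $ \Ical $-shaped colimits in $ \Ccal $ together with the locally cartesian hypothesis, using two standard features of a localization $ L \colon \fromto{\Ccal}{\Dcal} $: first, $ \Dcal $ is a reflective subcategory of $ \Ccal $, so that $ \Dcal $ has pullbacks and these are computed as in $ \Ccal $; second, an $ \Ical $-shaped colimit in $ \Dcal $ is computed by applying $ L $ to the corresponding colimit in $ \Ccal $. Let $ p \colon \fromto{\Ical}{\Dcal} $ be a diagram, write $ \widetilde{X} = \colim^{\Ccal}_{\Ical} p $ for its colimit in $ \Ccal $, let $ \eta \colon \fromto{\widetilde{X}}{L\widetilde{X}} $ be the unit of the localization, and write $ X $ for $ L\widetilde{X} $, which is the colimit of $ p $ in $ \Dcal $. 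Fix a morphism $ \fromto{Y}{X} $ in $ \Dcal $. The goal is to show that the canonical map $ \fromto{\colim^{\Dcal}_{\Ical}\paren{Y \cross_X p(i)}}{Y} $ is an equivalence; note that each pullback $ Y \cross_X p(i) $ lies in $ \Dcal $ and agrees with the pullback formed in $ \Ccal $.

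First I would form the pullback $ \widetilde{Y} = Y \cross_X \widetilde{X} $ in $ \Ccal $, taken along $ \eta $. Since the colimit cocone $ \fromto{p(i)}{X} $ in $ \Dcal $ is, viewed in $ \Ccal $, the composite of the $ \Ccal $-colimit cocone $ \fromto{p(i)}{\widetilde{X}} $ with $ \eta $ --- this uses naturality of $ \eta $ and the fact that $ \eta_{p(i)} $ is an equivalence because $ p(i) \in \Dcal $ --- the pasting law for pullbacks gives $ Y \cross_X p(i) \simeq \widetilde{Y} \cross_{\widetilde{X}} p(i) $ for every $ i \in \Ical $, and each of these objects lies in $ \Dcal $. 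Now universality of $ \Ical $-shaped colimits in $ \Ccal $, applied to the colimit diagram $ \widetilde{X} = \colim^{\Ccal}_{\Ical} p $ and the morphism $ \fromto{\widetilde{Y}}{\widetilde{X}} $, gives $ \widetilde{Y} \simeq \colim^{\Ccal}_{\Ical}\paren{\widetilde{Y} \cross_{\widetilde{X}} p(i)} \simeq \colim^{\Ccal}_{\Ical}\paren{Y \cross_X p(i)} $, a colimit in $ \Ccal $ of a diagram valued in $ \Dcal $. Applying $ L $, which preserves colimits and is equivalent to the identity on $ \Dcal $, identifies $ L\widetilde{Y} $ with the colimit of $ i \mapsto Y \cross_X p(i) $ formed in $ \Dcal $. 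On the other hand, the locally cartesian hypothesis applied to the cospan $ \fromto{Y}{X} \ot \widetilde{X} $, with $ Y, X \in \Dcal $, gives $ L\widetilde{Y} = L\paren{Y \cross_X \widetilde{X}} \simeq Y \cross_X L\widetilde{X} = Y \cross_X X \simeq Y $. Combining the two identifications of $ L\widetilde{Y} $, and checking by unwinding that they intertwine the canonical comparison maps, yields the claim.

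As the statement advertises, nothing here is essentially difficult once the definitions are unwound, so I do not anticipate a genuine obstacle. The only point that demands care is the bookkeeping: keeping track of whether a given pullback or colimit is formed in $ \Ccal $ or in $ \Dcal $ --- the relevant pullbacks agree, whereas the colimits differ by $ L $ --- and confirming that the equivalences assembled above are compatible with the canonical maps rather than merely being abstract identifications.
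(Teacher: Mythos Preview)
Your argument is correct and is precisely the unwinding the paper has in mind: the paper does not give a proof at all, merely asserting that the lemma ``is immediate from the definitions.'' Your careful bookkeeping --- computing the $\Dcal$-colimit as $L$ applied to the $\Ccal$-colimit, pasting pullbacks to move between the base $X$ and its $\Ccal$-preimage $\widetilde{X}$, invoking universality in $\Ccal$, and then using the locally cartesian hypothesis to identify $L\widetilde{Y}$ with $Y$ --- is exactly that unwinding.
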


\begin{example}\label{ex:finite_colimits_in_ProSpctrun_are_universal}
	Since the protruncation functor $ \protrun \colon \fromto{\ProSpc}{\ProSpctrun} $ preserves limits \cite[Proposition 3.9]{arXiv:2209.03476}, \Cref{ex:finite_colimits_are_universal_in_ProSpc,lem:locally_cartesian_localizations_preserve_universality_of_colimits} show that finite colimits are universal in $ \ProSpctrun $.
\end{example}

Now we formulate the key property of the category $ \Deltaop $ that we need.

\begin{definition}\label{def:n-colimit-cofinal}
	Let $ n \geq 0 $ be an integer.
	A functor between \categories $ c \colon \fromto{\Ical}{\Jcal} $ is \defn{$ n $-colimit-cofinal} if for every $ n $-category $ \Ccal $ and functor $ f \colon \fromto{\Jcal}{\Ccal} $, the following conditions are satisfied: 
	\begin{enumerate}[label=\stlabel{def:n-colimit-cofinal}]
		\item The colimit $ \colim_{\Jcal} f $ exists if and only if the colimit $ \colim_{\Ical} fc $ exists.

		\item If the colimit $ \colim_{\Jcal} f $ exists, then the natural map $ \fromto{\colim_{\Ical} fc}{\colim_{\Jcal} f} $ is an equivalence.
	\end{enumerate}
\end{definition}
	
\begin{example}\label{ex:Deltaleqn_n-colimit_cofinal}
	For an integer $ n \geq 0 $, write $ \DDelta_{\leq n} \subset \DDelta $ for the full subcategory spanned by those nonempty linearly ordered finite sets of cardinality $ \leq n + 1 $.
	By \cite[Proposition A.1]{arXiv:2207.09256}, the inclusion \smash{$ \Deltaop_{\leq n} \subset \Deltaop $} is $ n $-colimit-cofinal.
\end{example}

\begin{definition}\label{def:almost_finite}
	Let $ \Ical $ be \acategory.
	We say that $ \Ical $ is \defn{almost finite} if for each integer $ n \geq 0 $, there exists a \textit{finite} \category $ \Ical_n $ and an $ n $-colimit-cofinal functor $ c_n \colon \fromto{\Ical_n}{\Ical} $.
\end{definition}

Here are a number of important examples of almost finite \categories.

\begin{example}
	If $ \Ical $ is \acategory that admits a colimit-cofinal functor from a finite \category, then $ \Ical $ is almost finite.
\end{example}

\begin{example}
	For each $ n \geq 0 $, the category $ \Deltaop_{\leq n} $ is a finite \category \cite[Example 6.5.3]{arXiv:2009.07223}.
	Hence the category $ \Deltaop $ is almost finite: the inclusion \smash{$ \incto{\Deltaop_{\leq n}}{\Deltaop} $} is an $ n $-colimit-cofinal functor from a finite \category.
\end{example}

\begin{definition}
	Let $ K $ be a simplicial set.
	The \defn{\category presented by $ K $} is the image of $ K $ under the natural functor $ \fromto{\sSet}{\Catinfty} $ obtained by inverting the weak equivalences in the Joyal model structure.
	The \defn{space presented by $ K $} is the image of $ K $ under the natural functor $ \fromto{\sSet}{\Spc} $ obtained by inverting the weak equivalences in the Kan--Quillen model structure.
\end{definition}

\begin{example}\label{ex:simplicial_sets_with_finitely_many_simplices_in_each_dimension_are_almost_finite}
	Let $ K $ be a simplicial set with finitely many simplices in each dimension and let $ \Ical $ be the \category presented by $ K $.
	Then $ \Ical $ is almost finite: we take $ \Ical_{n} $ to be the \category presented by the $ (n+1) $-skeleton of $ \sk_{n+1} K $ and $ c_n \colon \fromto{\Ical_{n}}{\Ical} $ the functor induced by the inclusion $ \sk_{n+1} K \subset K $.
\end{example}

\begin{recollection}
	A space $ X $ is \textit{almost \pifinite} if $ \uppi_0(X) $ is finite and all homotopy groups of $ X $ are finite.
	An almost \pifinite space admits a presentation by a Kan complex with finitely many simplices in each dimension \SAG{Lemma}{E.1.6.5}. 
	Hence:
\end{recollection}

\begin{example}\label{ex:amost_pi-finite_spaces_are_almost_finite_categories}
	As a special case of \Cref{ex:simplicial_sets_with_finitely_many_simplices_in_each_dimension_are_almost_finite}, every amost \pifinite space is an almost finite \category.
\end{example}

\begin{definition}
	Let $ \Ccal $ be \acategory with pullbacks.
	We say that \defn{almost finite colimits are universal in $ \Ccal $} if for each almost finite \category $ \Ical $, the \category $ \Ccal $ admits $ \Ical $-shaped colimits and $ \Ical $-shaped colimits are universal in $ \Ccal $.
\end{definition}

The argument that Lurie gives in the proof of \SAG{Theorem}{E.6.3.1} essentially shows that almost finite colimits are universal in $ \ProSpcSigma $.
However, the statement about universality of colimits is less general and the result is only stated when $ \Sigma $ is the set of all primes.
We also need to know that almost finite colimits are also universal in $ \ProSpctrun $.
The strategy is the same as Lurie's proof: we use that equivalences are checked on truncations to reduce to the case of finite colimits.

\begin{proposition}\label{prop:almost_finite_colimits_in_protruncated_spaces_are_universal}
	Almost finite colimits are universal in $ \ProSpctrun $.
\end{proposition}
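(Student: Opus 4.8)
The plan is to carry out the truncation reduction sketched just before the statement. Two structural facts drive it: finite colimits are universal, in particular exist, in each $\Pro(\Spc_{\leq n})$ (\Cref{ex:finite_colimits_are_universal_in_ProSpc}); and $\Pro(\Spc_{\leq n})$ is an $(n+1)$-category, all of its mapping spaces being $n$-truncated, since they are built from the $n$-truncated mapping spaces of $\Spc_{\leq n}$ by filtered colimits and limits. Consequently, for $\Ical$ almost finite, the $(n+1)$-colimit-cofinal functor $c_{n+1}\colon\Ical_{n+1}\to\Ical$ from a finite \category provided by \Cref{def:almost_finite} reduces $\Ical$-shaped colimits \emph{taken in $\Pro(\Spc_{\leq n})$} to finite colimits, via \Cref{def:n-colimit-cofinal}. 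I will also use that $n$-truncation $\tau_{\leq n}\colon\ProSpctrun\to\Pro(\Spc_{\leq n})$ is a reflective localization, so preserves colimits, and that it is moreover left exact, being the pro-extension of the left-exact truncation $\Spctrun\to\Spc_{\leq n}$ (finite limits in pro-categories are computed levelwise, cf.\ the proof of \Cref{lem:finite_colimits_universal_in_ProC}).

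First I would record two preliminaries about $\ProSpctrun=\Pro(\Spctrun)$. \emph{(a)} The functors $\{\tau_{\leq n}\}_{n\geq0}$ are jointly conservative: every $c\in\Spctrun$ is $k$-truncated for some $k$, so $\Map(X,c)\simeq\Map(\tau_{\leq k}X,c)$ for all $X$, and a map inducing an equivalence after every truncation thus induces an equivalence on $\Map(-,c)$ for all $c\in\Spctrun$ and is therefore an equivalence. \emph{(b)} If $\{X_m\}_{m\geq0}$ is a tower in $\ProSpctrun$ with $X_m\in\Pro(\Spc_{\leq m})$ and each $X_{m+1}\to X_m$ exhibiting $X_m\simeq\tau_{\leq m}X_{m+1}$, then $\tau_{\leq k}(\lim_m X_m)\simeq X_k$ for every $k$. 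Indeed, since $\Spctrun$ is the defining subcategory of $\ProSpctrun$, mapping out of the cofiltered limit into a $c\in\Spctrun$ gives the corresponding filtered colimit, $\Map(\lim_m X_m,c)\simeq\colim_m\Map(X_m,c)$; for $k$-truncated $c$ this tower is eventually constant at $\Map(X_k,c)$, so $\Map(\lim_m X_m,c)\simeq\Map(X_k,c)$, and letting $c$ range over $\Spc_{\leq k}$ identifies $\tau_{\leq k}\lim_m X_m$ with $X_k$.

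Given $F\colon\Ical\to\ProSpctrun$, I would then set $L_n:=\colim_\Ical(\tau_{\leq n}\circ F)$ \emph{computed in $\Pro(\Spc_{\leq n})$}; this exists because $c_{n+1}$ identifies it with the finite colimit $\colim_{\Ical_{n+1}}(\tau_{\leq n}\circ F\circ c_{n+1})$. As $\tau_{\leq n-1}$ preserves colimits and $\tau_{\leq n-1}\tau_{\leq n}\simeq\tau_{\leq n-1}$, we get $\tau_{\leq n-1}L_n\simeq L_{n-1}$, so the $L_n$ assemble into a tower of the type in (b). I claim $\colim_\Ical F\simeq\lim_n L_n$: for a $k$-truncated $c$, fact (b) gives $\Map(\lim_n L_n,c)\simeq\Map(L_k,c)\simeq\lim_\Ical\Map(\tau_{\leq k}\circ F,c)\simeq\lim_\Ical\Map(F,c)$, and as every object of $\ProSpctrun$ is a cofiltered limit of objects of $\Spctrun$, this identifies $\Map(\lim_n L_n,-)$ with $\lim_\Ical\Map(F,-)$ on all of $\ProSpctrun$; the evident cone $F\to\lim_n L_n$ then exhibits $\lim_n L_n$ as the colimit. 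In particular $\tau_{\leq n}(\colim_\Ical F)\simeq L_n$.

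Finally, for universality let $X=\colim_\Ical F$ and $g\colon Y\to X$; I must show the colimit of $i\mapsto Y\times_X F(i)$ is $Y$. By (a) it suffices to check this after each $\tau_{\leq n}$. Since $\tau_{\leq n}$ preserves colimits and is left exact and $\tau_{\leq n}X\simeq L_n$, the $n$-truncated colimit equals $\colim_\Ical(\tau_{\leq n}Y\times_{L_n}\tau_{\leq n}F(i))$ in $\Pro(\Spc_{\leq n})$; applying $c_{n+1}$, this is the base change along $\tau_{\leq n}Y\to L_n\simeq\colim_{\Ical_{n+1}}(\tau_{\leq n}F\circ c_{n+1})$ of a \emph{finite} colimit diagram in $\Pro(\Spc_{\leq n})$, so universality of finite colimits there (\Cref{ex:finite_colimits_are_universal_in_ProSpc}) identifies it with $\tau_{\leq n}Y$, as required. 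I expect fact (b)---pinning down how $n$-truncation interacts with the cofiltered tower limit inside the pro-category, and more generally upgrading the ``checked after every $\tau_{\leq n}$'' identities into honest equivalences in $\ProSpctrun$---to be the main obstacle; the description of mapping spaces out of cofiltered limits is what keeps the argument tractable.
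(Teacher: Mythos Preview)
Your universality argument has a genuine gap: the claim that $\tau_{\leq n}\colon\ProSpctrun\to\Pro(\Spc_{\leq n})$ is left exact is false. Truncation preserves finite products but not pullbacks. Already on constant objects this fails: with $X=\Kup(\ZZ,2)\in\Spctrun$ and the cospan $\ast\to X\leftarrow\ast$, the pullback is $\Omega X\simeq\Kup(\ZZ,1)$, so $\tau_{\leq 1}(\ast\times_X\ast)\simeq\Kup(\ZZ,1)$, whereas $\tau_{\leq 1}(\ast)\times_{\tau_{\leq 1}X}\tau_{\leq 1}(\ast)\simeq\ast\times_\ast\ast\simeq\ast$. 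Hence the step
\[
\tau_{\leq n}\bigl(Y\times_X F(i)\bigr)\;\simeq\;\tau_{\leq n}Y\times_{L_n}\tau_{\leq n}F(i)
\]
is unjustified, and without it you cannot reduce to universality of finite colimits in $\Pro(\Spc_{\leq n})$.

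This is exactly the obstruction the paper's proof works around, and it explains why the paper uses $c_{n+2}$ rather than your $c_{n+1}$. The paper never tries to commute $\tau_{\leq n}$ past the pullback. Instead it first replaces $\Ical$ by the finite $\Ical_{n+2}$ \emph{before} truncating, invokes universality of finite colimits in $\ProSpctrun$ itself (\Cref{ex:finite_colimits_in_ProSpctrun_are_universal}) to commute the finite colimit past the pullback there, and only then truncates. The remaining comparison, between $X\times_Z\colim_{\Ical_{n+2}}gc_{n+2}$ and $X\times_Z\colim_{\Ical}g$, is handled by showing that $\colim_{\Ical_{n+2}}gc_{n+2}\to\colim_{\Ical}g$ is $n$-connected (an equivalence on $(n{+}1)$-truncations, which is why one needs $(n{+}2)$-colimit-cofinality and the $(n{+}2)$-category $\Pro(\Spc_{\leq n+1})$); $n$-connected maps \emph{are} stable under pullback, so the basechange is again $n$-connected and hence an equivalence after $\tau_{\leq n}$. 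Your existence argument via $\lim_n L_n$ is fine and is a pleasant complement to the paper's proof, but to repair the universality step you will need to trade ``$\tau_{\leq n}$ is left exact'' for this connectivity argument and bump the cofinality index by one.
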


\noindent Moreover, \Cref{prop:almost_finite_colimits_in_protruncated_spaces_are_universal} reproves and strengthens \SAG{Theorem}{E.6.3.1}:

\begin{corollary}\label{cor:almost_finite_colimits_in_pro-L-finite_spaces_are_universal}
	Let $ \Sigma $ be a set of primes.
	Then almost finite colimits are universal in $ \ProSpcSigma $.
\end{corollary}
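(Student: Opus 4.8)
The plan is to deduce \Cref{cor:almost_finite_colimits_in_pro-L-finite_spaces_are_universal} from \Cref{prop:almost_finite_colimits_in_protruncated_spaces_are_universal} by transporting universality along the localization $ (-)\Sigmacomp \colon \fromto{\ProSpctrun}{\ProSpcSigma} $, which we have already recorded is locally cartesian. No new geometric input is needed: all of the content lies in the protruncated case, and the passage to \Sigmaprofinite spaces is formal.

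First I would set up the ambient categorical bookkeeping. Since \pifinite spaces are truncated, $ \ProSpcSigma = \Pro(\SpcSigma) $ sits inside $ \ProSpctrun = \Pro(\Spctrun) $, and by hypothesis $ (-)\Sigmacomp $ exhibits $ \ProSpcSigma $ as a reflective localization of $ \ProSpctrun $. A reflective subcategory is closed under all limits that exist in the ambient \category, so $ \ProSpcSigma $ inherits pullbacks from $ \ProSpctrun $; and, dually, a reflective localization of a \category admitting $ \Ical $-shaped colimits again admits $ \Ical $-shaped colimits, computed by taking the colimit downstairs and applying the left adjoint $ (-)\Sigmacomp $. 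Combined with \Cref{prop:almost_finite_colimits_in_protruncated_spaces_are_universal}, this already shows that $ \ProSpcSigma $ has pullbacks and admits $ \Ical $-shaped colimits for every almost finite \category $ \Ical $, so that the statement to be proven at least makes sense.

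The main step is then a single invocation of \Cref{lem:locally_cartesian_localizations_preserve_universality_of_colimits}. Fix an almost finite \category $ \Ical $. By \Cref{prop:almost_finite_colimits_in_protruncated_spaces_are_universal}, $ \ProSpctrun $ admits $ \Ical $-shaped colimits and they are universal there. Applying \Cref{lem:locally_cartesian_localizations_preserve_universality_of_colimits} with $ \Ccal = \ProSpctrun $, $ \Dcal = \ProSpcSigma $, and $ L = (-)\Sigmacomp $ the locally cartesian localization, we conclude that $ \Ical $-shaped colimits are universal in $ \ProSpcSigma $. Since $ \Ical $ was an arbitrary almost finite \category, almost finite colimits are universal in $ \ProSpcSigma $.

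As for the hard part: there essentially is none here — the corollary is a purely formal consequence of the protruncated result, with the only points requiring care being the (routine) verification that the hypotheses of \Cref{lem:locally_cartesian_localizations_preserve_universality_of_colimits} hold, i.e.\ that $ \ProSpcSigma $ has pullbacks and that $ \ProSpctrun $ has the relevant colimits. One could instead mimic Lurie's argument for \SAG{Theorem}{E.6.3.1} directly, reducing via truncations to universality of finite colimits in $ \Pro(\Spc_{\leq n}) $, but routing through \Cref{prop:almost_finite_colimits_in_protruncated_spaces_are_universal} both avoids repeating that reduction and yields the stronger statement (universality, and for arbitrary $ \Sigma $) for free.
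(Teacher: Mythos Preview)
Your proposal is correct and follows exactly the paper's approach: the corollary is deduced from \Cref{prop:almost_finite_colimits_in_protruncated_spaces_are_universal} by invoking \Cref{lem:locally_cartesian_localizations_preserve_universality_of_colimits} for the locally cartesian localization $ (-)\Sigmacomp \colon \ProSpctrun \to \ProSpcSigma $. The additional bookkeeping you include about existence of pullbacks and $\Ical$-shaped colimits in $\ProSpcSigma$ is more explicit than the paper, but the argument is the same.
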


\begin{proof}[Proof of \Cref{cor:almost_finite_colimits_in_pro-L-finite_spaces_are_universal}]
	Since the localization $ (-)\Sigmacomp \colon \fromto{\ProSpctrun}{\ProSpcSigma} $ is locally cartesian \cite[Proposition 3.18]{arXiv:2209.03476}, this follows from \Cref{lem:locally_cartesian_localizations_preserve_universality_of_colimits,prop:almost_finite_colimits_in_protruncated_spaces_are_universal}.
\end{proof}

\begin{proof}[Proof of \Cref{prop:almost_finite_colimits_in_protruncated_spaces_are_universal}]
	Let $ \Ical $ be an almost finite \category, let $ f \colon \fromto{X}{Z} $ be a morphism in $ \ProSpctrun $, and let
	\begin{equation*}
		g \colon \fromto{\Ical}{\ProSpctrun_{/Z}} 
	\end{equation*}
	be a diagram of protruncated spaces over $ Z $.
	To prove the claim, it suffices to show that for each integer $ n \geq 0 $, the induced map
	\begin{equation*}
		\trun_{\leq n}\paren{\colim_{i \in \Ical} X \crosslimits_{Z} g(i)} \to \trun_{\leq n}\paren{X \crosslimits_{Z} \colim_{i \in \Ical} g(i)}
	\end{equation*}
	is an equivalence in $ \Pro(\Spc_{\leq n}) $.
	Since $ \Ical $ is almost finite, there exists a finite \category $ \Ical_{n+2} $ and $ (n+2) $-colimit-cofinal functor $ c_{n+2} \colon \fromto{\Ical_{n+2}}{\Ical} $.
	Consider the commutative diagram
	\begin{equation*}
		\begin{tikzcd}
			\displaystyle \colim_{j \in \Ical_{n+2}} \trun_{\leq n}\paren{X \crosslimits_Z gc_{n+2}(j)} \arrow[r, "\sim"{yshift=-0.25em}] \arrow[d] & \displaystyle\trun_{\leq n} \biggl(\colim_{j \in \Ical_{n+2}} X \crosslimits_{Z} gc_{n+2}(j) \biggr) \arrow[r] \arrow[d] & \displaystyle\trun_{\leq n}\biggl(X \crosslimits_{Z} \colim_{j \in \Ical_{n+2}} gc_{n+2}(j) \biggr) \arrow[d] \\
			\displaystyle \colim_{i \in \Ical} \trun_{\leq n}\paren{X \crosslimits_Z g(i)} \arrow[r, "\sim"{yshift=-0.25em}] & \displaystyle\trun_{\leq n}\paren{\colim_{i \in \Ical} X \crosslimits_{Z} g(i)} \arrow[r] & \displaystyle\trun_{\leq n}\paren{X \crosslimits_{Z} \colim_{i \in \Ical} g(i)} \period
		\end{tikzcd}
	\end{equation*}
	(Here, the colimits in the leftmost column are computed in $ \Pro(\Spc_{\leq n})$.)
	Since $ \Pro(\Spc_{\leq n}) $ is an $ (n+1) $-category and $ c_{n+2} \colon \fromto{\Ical_{n+2}}{\Ical} $ is $ (n+2) $-colimit-cofinal, the leftmost vertical map is an equivalence.
	Thus the central vertical map is also an equivalence.
	Since $ \Ical_{n+2} $ is finite and finite colimits are universal in $ \ProSpctrun $ (\Cref{ex:finite_colimits_in_ProSpctrun_are_universal}), the top right-hand horizontal map is an equivalence.

	To complete the proof, it suffices to show that the rightmost vertical map is an equivalence.
	For this, consider the commutative square
	\begin{equation*}
		\begin{tikzcd}
			\displaystyle\colim_{j \in \Ical_{n+2}} \trun_{\leq n+1} gc_{n+2}(j) \arrow[r, "\sim"{yshift=-0.25em}] \arrow[d] & \displaystyle\trun_{\leq n+1} \biggl(\colim_{j \in \Ical_{n+2}}gc_{n+2}(j) \biggr) \arrow[d] \\
			\displaystyle\colim_{i \in \Ical} \trun_{\leq n+1}g(i) \arrow[r, "\sim"{yshift=-0.25em}] & \displaystyle\trun_{\leq n+1}\paren{\colim_{i \in \Ical} g(i)} \comma
		\end{tikzcd}
	\end{equation*}
	where the colimits in the left-hand column are computed in $ \Pro(\Spc_{\leq n+1}) $.
	Since $ \Pro(\Spc_{\leq n+1}) $ is an $ (n+2) $-category and $ c_{n+2} $ is $ (n+2) $-colimit-cofinal, the left-hand vertical map is an equivalence.
	Hence the right-hand vertical map is also an equivalence.
	As a consequence, the map 
	\begin{equation*}
		\colim_{j \in \Ical_{n+2}} gc_{n+2}(j) \to \colim_{i \in \Ical} g(i)
	\end{equation*}
	is $ n $-connected.
	Thus the basechange
	\begin{equation*}
		X \crosslimits_Z \colim_{j \in \Ical_{n+2}} gc_{n+2}(j) \to X \crosslimits_Z \colim_{i \in \Ical} g(i)
	\end{equation*}
	is also $ n $-connected.
	Hence the map
	\begin{equation*}
		\trun_{\leq n}\paren{X \crosslimits_{Z} \colim_{j \in \Ical_{n+2}} gc_{n+2}(j)} \to \trun_{\leq n}\paren{X \crosslimits_{Z} \colim_{i \in \Ical} g(i)}
	\end{equation*}
	is an equivalence, as desired.
\end{proof}

Note that the universality of geometric realizations implies that geometric realizations preserve finite products:

\begin{lemma}\label{lem:universal_sifted_colimits_preserve_finite_products}
	Let $ \Ical $ be a sifted \category and let $ \Ccal $ be \acategory with finite limits and $ \Ical $-shaped colimits.
	If $ \Ical $-shaped colimits are universal in $ \Ccal $, then the functor
	\begin{equation*}
		\textstyle \colim_{\Ical} \colon \fromto{\Fun(\Ical,\Ccal)}{\Ccal}
	\end{equation*}
	preserves finite products.
\end{lemma}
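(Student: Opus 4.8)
The plan is to reduce the statement to two ingredients: that finite products distribute over $ \Ical $-shaped colimits in $ \Ccal $, which will be a formal consequence of universality, and that the diagonal $ \delta \colon \fromto{\Ical}{\Ical \cross \Ical} $ is cofinal, which is exactly the definition of siftedness together with nonemptiness of $ \Ical $. Throughout I would use that both finite limits and $ \Ical $-shaped colimits in $ \Fun(\Ical, \Ccal) $ are computed pointwise.

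First I would dispatch the nullary case. The terminal object of $ \Fun(\Ical, \Ccal) $ is the constant functor at the terminal object $ \ast $ of $ \Ccal $. Since $ \Ical $ is sifted it is weakly contractible: the cofinal functor $ \delta $ induces an equivalence $ \abs{\Ical} \simeq \abs{\Ical \cross \Ical} \simeq \abs{\Ical} \cross \abs{\Ical} $ realized by the diagonal, and for nonempty $ X $ the diagonal $ \fromto{X}{X \cross X} $ is an equivalence only when $ X $ is contractible. Hence the unique functor $ \fromto{\Ical}{\ast} $ is cofinal, so $ \colim_{\Ical} \ast \simeq \ast $ and $ \colim_\Ical $ preserves the terminal object.

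Next I would prove the key distributivity statement: for every $ H \colon \fromto{\Ical}{\Ccal} $ and every $ Y \in \Ccal $, the canonical map $ \fromto{\colim_i \paren{H(i) \cross Y}}{\paren{\colim_i H(i)} \cross Y} $ is an equivalence, naturally in $ H $ and $ Y $. This is immediate from universality of $ \Ical $-colimits: base change along the projection $ \fromto{\paren{\colim_i H(i)} \cross Y}{\colim_i H(i)} $ carries the universal cocone under $ H $ to the cocone under the diagram $ i \mapsto H(i) \cross Y $, and universality identifies the colimit of the latter with $ \paren{\colim_i H(i)} \cross Y $. Applying distributivity once in each variable to the external product $ F \boxtimes G \colon \fromto{\Ical \cross \Ical}{\Ccal} $, $ (i,j) \mapsto F(i) \cross G(j) $, together with the pointwise formula for iterated colimits, yields $ \colim_{(i,j)} \paren{F(i) \cross G(j)} \simeq \paren{\colim_\Ical F} \cross \paren{\colim_\Ical G} $, compatibly with the evident comparison maps.

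Finally, precomposing with the diagonal, $ (F \boxtimes G) \circ \delta $ is the pointwise product $ F \cross G $, so cofinality of $ \delta $ gives $ \colim_\Ical \paren{F \cross G} \simeq \colim_{\Ical \cross \Ical} \paren{F \boxtimes G} \simeq \paren{\colim_\Ical F} \cross \paren{\colim_\Ical G} $, and tracing through the identifications exhibits this as the canonical comparison map; combined with the nullary case, $ \colim_\Ical $ preserves finite products. The main point requiring care is bookkeeping rather than genuine difficulty: one must check that the distributivity equivalence is natural enough to feed into the iterated-colimit formula, and that the resulting composite equivalence is the natural comparison map rather than merely an abstract one.
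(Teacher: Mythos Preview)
Your proof is correct and follows essentially the same route as the paper's: use universality of $\Ical$-shaped colimits to distribute products over colimits one variable at a time, then invoke cofinality of the diagonal $\Ical \to \Ical \cross \Ical$ to collapse the double colimit. The only differences are that you run the two ingredients in the opposite order and that you explicitly treat the nullary case (preservation of the terminal object via weak contractibility of $\Ical$), which the paper leaves implicit.
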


\begin{proof}
	Let $ \Xbullet, \Ybullet \colon \fromto{\Ical}{\Ccal} $ be functors.
	We have natural equivalences
	\begin{align*}
		\colim_{i \in \Ical} X_i \cross Y_i &\equivalence \colim_{(i,j) \in \Ical \cross \Ical} X_i \cross Y_j &&\text{($ \Ical $ is sifted}) \\
		&\equivalent \colim_{i \in \Ical} \colim_{j \in \Ical} \paren{X_i \cross Y_j} \\
		&\equivalence \colim_{i \in \Ical} \paren{X_i \cross \colim_{j \in \Ical} Y_j} &&\text{($ \Ical $-shaped colimits are universal}) \\
		&\equivalence \paren{\colim_{i \in \Ical \phantom{j}\! } X_{i}} \cross \paren{\colim_{j \in \Ical} Y_j} &&\text{($ \Ical $-shaped colimits are universal}) \period \qedhere
	\end{align*}
\end{proof}

\begin{corollary}\label{cor:geometric_realizations_preserve_finite_products_protruncated_profinite}
	Let $ \Sigma $ be a set of primes.
	Then geometric realizations preserve finite products in the \categories $ \ProSpcSigma $ and $ \ProSpctrun $.
\end{corollary}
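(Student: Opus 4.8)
The plan is to recognize the geometric realization functor as the colimit functor indexed by $ \Deltaop $ and then combine the universality results already established with \Cref{lem:universal_sifted_colimits_preserve_finite_products}. Concretely, the first step is to observe that $ \Deltaop $ is a sifted \category (\HTT{Lemma}{5.5.8.4}) and that it is almost finite: by \Cref{ex:Deltaleqn_n-colimit_cofinal} the inclusion $ \Deltaop_{\leq n} \subset \Deltaop $ is $ n $-colimit-cofinal, and $ \Deltaop_{\leq n} $ is a finite \category, so for each $ n $ this inclusion supplies the finite approximation demanded by \Cref{def:almost_finite}.

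Next I would note that both $ \ProSpctrun $ and $ \ProSpcSigma $ admit finite limits: they are pro-categories of \categories with finite limits, and in particular the point is a terminal object in each, so finite limits (not merely pullbacks) are available. By \Cref{prop:almost_finite_colimits_in_protruncated_spaces_are_universal,cor:almost_finite_colimits_in_pro-L-finite_spaces_are_universal} both \categories admit $ \Deltaop $-shaped colimits and these colimits are universal. Having assembled these inputs, \Cref{lem:universal_sifted_colimits_preserve_finite_products} applied with $ \Ical = \Deltaop $ immediately yields that $ \colim_{\Deltaop} $, i.e. the geometric realization functor, preserves finite products in each of $ \ProSpctrun $ and $ \ProSpcSigma $.

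There is essentially no obstacle here: the statement is a formal consequence of the preceding results, and the only things one must check are the two elementary properties of $ \Deltaop $ (siftedness and almost finiteness), both of which are standard or have already been recorded in the excerpt. If anything, the one point worth an explicit sentence is the availability of finite limits in the target \categories, since \Cref{lem:universal_sifted_colimits_preserve_finite_products} is phrased in terms of finite limits rather than pullbacks; as noted above this is immediate because both $ \ProSpctrun $ and $ \ProSpcSigma $ have a terminal object.
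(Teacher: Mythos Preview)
Your proposal is correct and follows exactly the paper's approach: the paper's proof is the single line ``Combine \Cref{prop:almost_finite_colimits_in_protruncated_spaces_are_universal,cor:almost_finite_colimits_in_pro-L-finite_spaces_are_universal,lem:universal_sifted_colimits_preserve_finite_products},'' and you have simply spelled out the same combination, supplying the (standard) observations that $\Deltaop$ is sifted and almost finite and that the target \categories have finite limits.
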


\begin{proof}
	Combine \Cref{prop:almost_finite_colimits_in_protruncated_spaces_are_universal,cor:almost_finite_colimits_in_pro-L-finite_spaces_are_universal,lem:universal_sifted_colimits_preserve_finite_products}.
\end{proof}





\section{Completions of products}\label{sec:completions_of_products}

The goal of this section is to prove \Cref{intro_prop:Sigma-completion_preserves_products,intro_prop:completions_of_products_of_etale_homotopy_types}.
We do this by noting that in the setting of both of these results, the prospaces of interest can be written as geometric realizations of simplicial profinite spaces.
We begin by axiomatizing the situation:


\begin{definition}\label{def:D-resolution}
	Let $ \Ccal $ be \acategory, let $ \Dcal \subset \Ccal $ be a full subcategory, and let $ X \in \Ccal $.
	A \defn{$ \Dcal $-resolution} of $ X $ is a simplicial object $ \Xbullet \colon \fromto{\Deltaop}{\Dcal} $ together with an equivalence
	\begin{equation*}
		X \equivalent \colim\left\lparen\!
		\begin{tikzcd}
			\Deltaop \arrow[r, "\Xbullet"] & \Dcal \arrow[r, hooked] & \Ccal
		\end{tikzcd}
		\!\right\rparen \period
	\end{equation*}
	We say that an object $ X $ \defn{admits a $ \Dcal $-resolution} if there exists $ \Dcal $-resolution of $ X $.
\end{definition}

\begin{notation}
	Write $ \Setfin $ for the category of a finite sets and all maps.
\end{notation}

Finite and almost \pifinite spaces admit $ \Setfin $-resolutions:

\begin{lemma}\label{lem:finite_spaces_and_almost_pi-finite_spaces_admit_Setfin-resolutions}
	Let $ X $ be a space which is finite%
	\footnote{I.e., in the smallest subcategory of $ \Spc $ containing the point and the empty set and closed under pushouts.
	Equivalently, a space $ X $ is finite if and only if $ X $ is represented by a finite CW complex.}
	or almost \pifinite.
	Then:
	\begin{enumerate}
		\item As an object of $ \Spc $, the space $ X $ admits a $ \Setfin $-resolution.

		\item When regarded as a constant object of $ \ProSpc $, the space $ X $ admits a $ \Setfin $-resolution.

		\item The protruncated space $ \protrun(X) $ admits a $ \Setfin $-resolution.

		\item The profinite space $ X\profincomp $ admits a $ \Setfin $-resolution.
	\end{enumerate}
\end{lemma}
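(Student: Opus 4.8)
The plan is to prove part (1) directly and then to deduce parts (2)--(4) by transporting the resolution produced in (1) along colimit-preserving functors. For part (1), the key point is that $ X $ admits a presentation by a simplicial set $ K $ with $ K_n \in \Setfin $ for every $ n \geq 0 $: when $ X $ is almost \pifinite this is \SAG{Lemma}{E.1.6.5}, and when $ X $ is finite it follows from the classical fact that a finite CW complex is homotopy equivalent to a finite simplicial complex (alternatively, from a short induction on the number of cells, modeling each attaching pushout by a finite mapping cylinder). Regarding such a $ K $ as a simplicial object $ \Ubullet \colon \fromto{\Deltaop}{\Setfin} $, the colimit of $ \Ubullet $ computed in $ \Spc $ is then the space presented by $ K $, namely $ X $ --- the standard comparison between the realization of a simplicial set and the geometric realization in $ \Spc $ of the associated levelwise-discrete simplicial object. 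Thus $ \Ubullet $ is a $ \Setfin $-resolution of $ X $ in $ \Spc $, proving (1).

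Parts (2)--(4) all follow from one transport observation: if $ F \colon \fromto{\Ccal}{\Ccal'} $ preserves geometric realizations and carries a full subcategory $ \Dcal \subseteq \Ccal $ into a full subcategory $ \Dcal' \subseteq \Ccal' $, then $ F $ carries every object admitting a $ \Dcal $-resolution to an object admitting a $ \Dcal' $-resolution --- one applies $ F $ to the colimit defining the resolution, using that the restriction of $ F $ to $ \Dcal $ factors through $ \Dcal' $. For part (2), I would apply this to the (fully faithful) constant-prospace functor $ \incto{\Spc}{\ProSpc} $ with $ \Dcal = \Dcal' = \Setfin $; the one point to check is that this functor preserves geometric realizations, which is a short Fubini argument (mapping out of a constant prospace into a prospace $ W $ is the cofiltered limit of the mapping spaces in $ \Spc $ into the terms of $ W $, and this cofiltered limit commutes with the limit over $ \Deltaop $ computing the geometric realization), so (1) gives (2). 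For part (3), I would take $ F = \protrun \colon \fromto{\ProSpc}{\ProSpctrun} $: this is the left adjoint to the inclusion $ \incto{\ProSpctrun}{\ProSpc} $, hence preserves all colimits, and it restricts to the identity on $ \Setfin $ because finite sets are $ 0 $-truncated, so (2) gives (3). For part (4), I would take $ F = (-)\profincomp \colon \fromto{\ProSpc}{\ProSpcfin} $, the left adjoint to the inclusion $ \incto{\ProSpcfin}{\ProSpc} $, which restricts to the identity on $ \Setfin $ since finite sets are \pifinite, so (2) gives (4).

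The step I expect to require genuine care is the comparison used in part (1): that $ \colim_{\Spc} \Ubullet $ is the space presented by $ K $. Although standard, this is a properly homotopy-theoretic fact --- it is essentially the statement that $ K $ is the homotopy colimit of its diagram of simplices, and one can prove it by noting that $ K $, regarded as a Reedy cofibrant levelwise-discrete simplicial object of $ \sSet $, has homotopy colimit over $ \Deltaop $ equal to $ K $ again, and that $ \fromto{\sSet}{\Spc} $ sends homotopy colimits to colimits. In particular it is \emph{not} a formal consequence of $ \fromto{\sSet}{\Spc} $ preserving colimits, which it does not do. Everything after part (1) is formal manipulation with (co)limits and reflective localizations. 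A lesser wrinkle is that a finite space need not be truncated or \pifinite, so the existence of a \emph{finite} simplicial presentation is not covered by \SAG{Lemma}{E.1.6.5}; but this is classical and quickly dispatched.
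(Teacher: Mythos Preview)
Your proposal is correct and follows essentially the same route as the paper: establish (1) via an explicit simplicial-set presentation (citing \SAG{Lemma}{E.1.6.5} in the almost \pifinite case and a classical finite-model argument in the finite case), then deduce (2)--(4) by pushing the resolution along the colimit-preserving functors $ \Spc \hookrightarrow \ProSpc \to \ProSpctrun \to \ProSpcfin $. One small simplification: your Fubini argument for (2) is unnecessary, since the inclusion $ \Spc \hookrightarrow \ProSpc $ is left adjoint to the materialization (limit) functor and hence preserves all colimits; the paper simply asserts this.
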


\begin{proof}
	For (1), if $ X $ is finite, then $ X $ can be written as the geometric realization of a simplicial set with finitely many nondegenerate simplices \cite[Proposition 2.4]{arXiv:2206.02728}.%
	\footnote{In fact, every finite space is equivalent to the classifying space of a finite poset; see \cites[\href{http://www.math.ias.edu/~lurie/papers/HTT.pdf\#theorem.4.1.1.3}{Proposition 4.1.1.3(3)} \& \HTTthm{Variant}{4.2.3.16}]{HTT}[\kerodontag{02MU}]{Kerodon}[Theorem 1]{MR196744}.}
	If $ X $ is almost \pifinite, then $ X $ can be written as the geometric realization of a Kan complex with finitely many simplices in each dimension \SAG{Lemma}{E.1.6.5}.

	Now note that (2)--(4) follow from (1) and that each functor in the diagram
	\begin{equation*}
		\begin{tikzcd}
			\Spc \arrow[r, hooked] & \ProSpc \arrow[r, "\protrun"] & \ProSpctrun \arrow[r, "(-)\profincomp"] & \ProSpcfin 
		\end{tikzcd}
	\end{equation*}
	preserves colimits.
\end{proof}

Algebraic geometry also gives rise to many examples of protruncated spaces admitting profinite resolutions.

\begin{notation}[(shapes)]
	Given \atopos $ \X $, we write $ \Shape(\X) \in \ProSpc $ for the \defn{shape} of $ \X $.
	We write $ \Shapeprotrun(\X) $ for the protruncation of $ \Shape(\X) $.
\end{notation}

\begin{nul}
	Recall that \atopos $ \X $ is \textit{spectral} in the sense of \cite[Definition 9.2.1]{arXiv:1807.03281} if $ \X $ is bounded coherent and the \category $ \Pt(\X) $ of points of $ \X $ has the property that every endomorphism of an object of $ \Pt(\X) $ is an equivalence.
	The most important example of a spectral \topos is the étale \topos of a qcqs scheme \cite[Example 9.2.4]{arXiv:1807.03281}.
	\Cref{ex:protruncated_shapes_of_spectral_topoi_admit_profinite_resolutions} explains why the protruncated shape $ \Shapeprotrun(\X) $ of a spectral \topos admits a natural $ \ProSpcfin $-resolution.
\end{nul}

The next few results are the key categorical input we need.
To state them, we axiomatize the abstract properties of the subcategory $ \ProSpcfin \subset \ProSpctrun $.

\begin{notation}\label{ntn:basic_assumptions}
	Let $ \Ccal $ be \acategory with geometric realizations and finite products, and let $ L \colon \fromto{\Ccal}{\Dcal} $ be a localization.
	Assume that geometric realizations preserve finite products in both $ \Ccal $ and $ \Dcal $.
	Write
	\begin{equation*}
		\Ccal_{\abs{\Dcal}} \subset \Ccal
	\end{equation*}
	for the smallest full subcategory containing $ \Dcal \subset \Ccal $ and closed under geometric realizations and retracts.
\end{notation}

\begin{lemma}\label{lem:localization_preserves_products_of_objects_with_resolutions}
	In the setting of \Cref{ntn:basic_assumptions}, let $ \Xbullet, \Ybullet \colon \fromto{\Deltaop}{\Ccal} $ be simplicial objects with colimits $ X $ and $ Y $.
	Assume that for each $ n \geq 0 $, the natural map
	\begin{equation*}
		\fromto{L(X_n \cross Y_n)}{L(X_n) \cross L(Y_n)}
	\end{equation*} 
	is an equivalence.
	Then the natural map
	\begin{equation*}
		\fromto{L(X \cross Y)}{L(X) \cross L(Y)}
	\end{equation*} 
	is an equivalence.
\end{lemma}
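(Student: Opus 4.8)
The plan is to prove the statement as a chain of natural equivalences, exploiting three facts supplied by \Cref{ntn:basic_assumptions}: that $ L $, being a localization, is a left adjoint and hence commutes with colimits (in particular with geometric realizations); that geometric realizations preserve finite products in $ \Ccal $; and that they do so in $ \Dcal $ as well.

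First I would form the levelwise product simplicial object $ \Xbullet \cross \Ybullet \colon \fromto{\Deltaop}{\Ccal} $, with $ (\Xbullet \cross \Ybullet)_n = X_n \cross Y_n $. Since geometric realizations preserve finite products in $ \Ccal $, we have $ \abs{\Xbullet \cross \Ybullet} \equivalent X \cross Y $; applying $ L $ and using that $ L $ commutes with geometric realizations gives
\begin{equation*}
	L(X \cross Y) \equivalent L\paren{\abs{\Xbullet \cross \Ybullet}} \equivalent \abs{L(\Xbullet \cross \Ybullet)} \comma
\end{equation*}
where the right-hand geometric realization is now formed in $ \Dcal $. The levelwise hypothesis says precisely that the natural map $ \fromto{L(\Xbullet \cross \Ybullet)}{L(\Xbullet) \cross L(\Ybullet)} $ is a levelwise equivalence of simplicial objects of $ \Dcal $, hence induces an equivalence on geometric realizations. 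Finally, since geometric realizations preserve finite products in $ \Dcal $ and $ L $ commutes with geometric realizations (so $ \abs{L(\Xbullet)} \equivalent L(X) $ and $ \abs{L(\Ybullet)} \equivalent L(Y) $), we obtain $ \abs{L(\Xbullet) \cross L(\Ybullet)} \equivalent \abs{L(\Xbullet)} \cross \abs{L(\Ybullet)} \equivalent L(X) \cross L(Y) $. Composing the displayed equivalences yields the desired equivalence.

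The one point needing care — more a bookkeeping check than a genuine obstacle — is verifying that this composite is the \emph{canonical} comparison map named in the statement, rather than merely some equivalence between the same two objects. Since every step in the chain is natural in the pair $ (\Xbullet, \Ybullet) $ and is compatible with the two projections to $ L(X) $ and $ L(Y) $, the composite is the map induced by applying $ L $ to the projections $ \fromto{X \cross Y}{X} $ and $ \fromto{X \cross Y}{Y} $, which is exactly how the comparison map is defined; I would make this precise by tracking the projections through the chain, all of whose terms are functorial in $ (\Xbullet, \Ybullet) $.
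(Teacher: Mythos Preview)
Your proof is correct and is essentially identical to the paper's own argument: both compute $L(X\times Y)$ as a chain of equivalences using that geometric realizations preserve finite products in $\Ccal$, that $L$ commutes with geometric realizations, the levelwise hypothesis, and that geometric realizations preserve finite products in $\Dcal$. The only difference is cosmetic notation and your added remark on naturality of the comparison map, which the paper leaves implicit.
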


\begin{proof}
	We compute
	\begin{align*}
		L(X \cross Y) &\equivalent L \paren{ \colim_{m \in \Deltaop} X_m \cross \colim_{n \in \Deltaop} Y_n } \\ 
		&\equivalent L\paren{ \colim_{n \in \Deltaop} X_n \cross Y_n }
		&& \text{(geometric realizations preserve finite products in $\Ccal$)}
		\\ 
		&\equivalent \colim_{n \in \Deltaop} L(X_n \cross Y_n) \\ 
		&\equivalence \colim_{n \in \Deltaop} L(X_n) \cross L(Y_n) &&
		\text{(assumption)} \\ 
		&\equivalent \colim_{m \in \Deltaop} L(X_m) \cross \colim_{n \in \Deltaop} L(Y_n)
		&& \text{(geometric realizations preserve finite products in $\Dcal$)} 
		\\ 
		&\equivalent L(X) \cross L(Y) \period && \qedhere
	\end{align*}
\end{proof}

\begin{corollary}\label{cor:localization_preserves_products_of_objects_with_iterated_resolutions}
	In the setting of \Cref{ntn:basic_assumptions}:
	\begin{enumerate}
		\item The full subcategory of $ \Ccal \cross \Ccal $ spanned by those objects $ (X,Y) $ such that the natural map
		\begin{equation*}
			\fromto{L(X \cross Y)}{L(X) \cross L(Y)}
		\end{equation*}
		is an equivalence is closed under geometric realizations and retracts.

		\item If $ X,Y \in \Ccal_{\abs{\Dcal}} $, then the natural map $ \fromto{L(X \cross Y)}{L(X) \cross L(Y)} $ is an equivalence.

		\item If $ X,Y \in \Ccal $ admit $ \Dcal $-resolutions, then the natural map $ \fromto{L(X \cross Y)}{L(X) \cross L(Y)} $ is an equivalence.
	\end{enumerate}
\end{corollary}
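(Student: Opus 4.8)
The plan is to deduce all three assertions from \Cref{lem:localization_preserves_products_of_objects_with_resolutions} together with one elementary observation. Since $ L $ is a localization, its right adjoint is fully faithful and exhibits $ \Dcal $ as a reflective subcategory of $ \Ccal $; in particular $ \Dcal $ is closed under limits computed in $ \Ccal $, because right adjoints preserve limits. Hence for $ X, Y \in \Dcal $ the product $ X \cross Y $ formed in $ \Ccal $ again lies in $ \Dcal $, so each of $ X $, $ Y $, and $ X \cross Y $ is $ L $-local and the comparison map $ \fromto{L(X \cross Y)}{L(X) \cross L(Y)} $ is an equivalence. Write $ \Ecal \subseteq \Ccal \cross \Ccal $ for the full subcategory spanned by the pairs $ (X,Y) $ for which this comparison map is an equivalence; the preceding remark gives $ \Dcal \cross \Dcal \subseteq \Ecal $.

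For assertion (1): the formation of the comparison map is functorial in $ (X,Y) $, i.e.\ it defines a functor $ \fromto{\Ccal \cross \Ccal}{\Fun(\Delta^1, \Dcal)} $, and $ (X,Y) \in \Ecal $ precisely when this functor carries $ (X,Y) $ to an equivalence. Since functors preserve retracts and a retract of an equivalence (in $ \Fun(\Delta^1,\Dcal) $) is an equivalence, $ \Ecal $ is closed under retracts. For closure under geometric realizations, note that a simplicial object of $ \Ccal \cross \Ccal $ valued in $ \Ecal $ is the same datum as a pair $ \Xbullet, \Ybullet \colon \fromto{\Deltaop}{\Ccal} $ with each $ (X_n, Y_n) \in \Ecal $; colimits in $ \Ccal \cross \Ccal $ are computed coordinatewise, so its geometric realization is $ (\abs{\Xbullet}, \abs{\Ybullet}) $, which lies in $ \Ecal $ by \Cref{lem:localization_preserves_products_of_objects_with_resolutions}. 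This is assertion (1).

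For assertion (2), I would run a two-variable induction. Fix $ Y \in \Dcal $ and let $ \Ecal_Y \subseteq \Ccal $ denote the full subcategory of objects $ X $ with $ (X,Y) \in \Ecal $. It contains $ \Dcal $ since $ \Dcal \cross \Dcal \subseteq \Ecal $. It is closed under retracts and geometric realizations: given a simplicial object $ \Xbullet $ of $ \Ecal_Y $, pair it with the constant simplicial object at $ Y $, whose geometric realization is $ Y $ (as $ \Deltaop $ is weakly contractible and $ \Ccal $ admits geometric realizations); then $ (\abs{\Xbullet}, Y) $ is the geometric realization of a simplicial object of $ \Ccal \cross \Ccal $ valued in $ \Ecal $, hence lies in $ \Ecal $ by (1), and the retract case is handled identically. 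Since $ \Ccal_{\abs{\Dcal}} $ is the smallest full subcategory of $ \Ccal $ containing $ \Dcal $ and closed under geometric realizations and retracts, $ \Ccal_{\abs{\Dcal}} \subseteq \Ecal_Y $; that is, $ \Ccal_{\abs{\Dcal}} \cross \Dcal \subseteq \Ecal $. Now exchange the roles of the two coordinates: for fixed $ X \in \Ccal_{\abs{\Dcal}} $, the full subcategory of objects $ Y $ with $ (X,Y) \in \Ecal $ contains $ \Dcal $ by the step just proved and is closed under geometric realizations and retracts by the same constant-diagram argument, hence contains $ \Ccal_{\abs{\Dcal}} $. Therefore $ \Ccal_{\abs{\Dcal}} \cross \Ccal_{\abs{\Dcal}} \subseteq \Ecal $, which is assertion (2).

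For assertion (3): a $ \Dcal $-resolution exhibits $ X $ as the geometric realization of a simplicial object of $ \Ccal $ that factors through $ \Dcal $, so $ X \in \Ccal_{\abs{\Dcal}} $, and likewise $ Y \in \Ccal_{\abs{\Dcal}} $; now invoke (2). (Alternatively, (3) follows at once by applying \Cref{lem:localization_preserves_products_of_objects_with_resolutions} to the two given resolutions and using the base case $ \Dcal \cross \Dcal \subseteq \Ecal $.) The step I expect to be the main obstacle is (2): it is a genuinely two-variable closure statement, and $ \Ccal_{\abs{\Dcal}} \cross \Ccal_{\abs{\Dcal}} $ is a priori strictly larger than the smallest subcategory of $ \Ccal \cross \Ccal $ generated from $ \Dcal \cross \Dcal $ under coordinatewise geometric realizations and retracts, so one cannot apply (1) in a single sweep. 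The fix is the observation that the geometric realization of a constant $ \Deltaop $-diagram recovers its value, which permits holding one coordinate fixed while inducting on the other.
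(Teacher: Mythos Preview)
Your proof is correct and follows the same overall architecture as the paper: deduce (1) from \Cref{lem:localization_preserves_products_of_objects_with_resolutions} together with closure of equivalences under retracts, then obtain (2) from (1), then (3) as a special case of (2). The paper's proof is extremely terse; in particular, for (2) it simply asserts that the claim is ``an immediate consequence of (1) and the definition of $\Ccal_{\abs{\Dcal}}$''.

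Your two-variable induction for (2) is exactly the content hidden in the paper's word ``immediate''. You are right that (1) only gives closure of $\Ecal$ under geometric realizations and retracts in $\Ccal \cross \Ccal$, and that one must still argue that the closure of $\Dcal \cross \Dcal$ under these operations contains all of $\Ccal_{\abs{\Dcal}} \cross \Ccal_{\abs{\Dcal}}$. Your constant-simplicial-object trick (pairing a varying simplicial object with a constant one, using that $\Deltaop$ is weakly contractible so that $\abs{\const_Y} \simeq Y$) is the standard way to do this, and it is what the paper is implicitly invoking. So there is no genuine difference in approach---you have simply supplied the details the paper omits, and correctly identified where the only nontrivial step lies.
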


\begin{proof}
	Item (1) follows from \Cref{lem:localization_preserves_products_of_objects_with_resolutions} and the fact that equivalences are closed under retracts.
	Now (2) is an immediate consequence of (1) and the definition of $ \Ccal_{\abs{\Dcal}} $ as the closure of $ \Dcal \subset \Ccal $ under geometric realizations and retracts.
	Finally, (3) is a special case of (2).
\end{proof}

We now record some consequences of \Cref{cor:localization_preserves_products_of_objects_with_iterated_resolutions}.

\begin{corollary}\label{cor:Sigma-completion_preserves_products}
	Let $ \Sigma $ be a set of prime numbers.
	Then the $ \Sigma $-completion functor
	\begin{equation*}
		(-)\Sigmacomp \colon \fromto{\ProSpcfin}{\ProSpcSigma}
	\end{equation*}
	preserves products.
\end{corollary}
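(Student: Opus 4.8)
The plan is to reduce to finite products of \pifinite spaces and then feed the situation into the categorical machinery assembled above; the real content has already been established in \Cref{cor:geometric_realizations_preserve_finite_products_protruncated_profinite,lem:finite_spaces_and_almost_pi-finite_spaces_admit_Setfin-resolutions,cor:localization_preserves_products_of_objects_with_iterated_resolutions}, so this proof is mostly an exercise in checking hypotheses together with a cofiltered-limit bookkeeping argument.

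The heart of the matter is the \pifinite case. Here I would instantiate \Cref{ntn:basic_assumptions} with $ \Ccal = \ProSpcfin $, $ \Dcal = \ProSpcSigma $, and $ L = (-)\Sigmacomp $: the \category $ \ProSpcfin $ has geometric realizations and finite products, $ (-)\Sigmacomp $ is a localization, and \Cref{cor:geometric_realizations_preserve_finite_products_protruncated_profinite} — used once with $ \Sigma $ the set of all primes, so that $ \ProSpcSigma = \ProSpcfin $, and once with the given $ \Sigma $ — shows that geometric realizations preserve finite products in both $ \ProSpcfin $ and $ \ProSpcSigma $, so the standing hypotheses hold. Since finite sets, regarded as \pifinite spaces, have trivial homotopy groups in positive degrees, $ \Setfin \subset \SpcSigma \subset \ProSpcSigma = \Dcal $, so any object of $ \ProSpcfin $ admitting a $ \Setfin $-resolution also admits a $ \Dcal $-resolution. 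By \Cref{lem:finite_spaces_and_almost_pi-finite_spaces_admit_Setfin-resolutions}(4) — noting that the profinite completion of a \pifinite space is that space itself, viewed in $ \ProSpcfin $ — every \pifinite space admits a $ \Setfin $-resolution as an object of $ \ProSpcfin $. Hence \Cref{cor:localization_preserves_products_of_objects_with_iterated_resolutions}(3) shows that $ \fromto{(X \cross Y)\Sigmacomp}{X\Sigmacomp \cross Y\Sigmacomp} $ is an equivalence for all \pifinite spaces $ X $ and $ Y $.

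It then remains to pass from \pifinite spaces to arbitrary profinite spaces, and from binary products to arbitrary ones. For this I would use that $ (-)\Sigmacomp $ preserves cofiltered limits, that it preserves the terminal object (which lies in $ \ProSpcSigma $), and that binary products, being limits, commute with cofiltered limits in both $ \ProSpcfin $ and $ \ProSpcSigma $. Since an arbitrary product is a cofiltered limit of its finite subproducts and a finite product is an iterated binary product, it suffices to treat binary products; and writing $ X, Y \in \ProSpcfin = \Pro(\Spcfin) $ as cofiltered limits of constant \pifinite spaces and using that products commute with cofiltered limits in each variable reduces the claim for $ X \cross Y $ to the \pifinite case settled above. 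The only input here that is not established earlier in the excerpt is that $ \Sigma $-completion preserves cofiltered limits; that, together with the routine check that the comparison maps are compatible with the limits in play, is the only place where any care is needed — everything else is formal.
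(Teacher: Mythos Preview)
Your proposal is correct and follows essentially the same route as the paper: reduce arbitrary products to binary products of \pifinite spaces using that $\Sigma$-completion preserves cofiltered limits and the terminal object, then invoke \Cref{lem:finite_spaces_and_almost_pi-finite_spaces_admit_Setfin-resolutions}, \Cref{cor:geometric_realizations_preserve_finite_products_protruncated_profinite}, and \Cref{cor:localization_preserves_products_of_objects_with_iterated_resolutions} with $\Ccal = \ProSpcfin$ and $\Dcal = \ProSpcSigma$. The only difference is expository order---you present the \pifinite heart first and the cofiltered-limit reduction second, whereas the paper does the reverse---and your observation that $\Setfin \subset \SpcSigma$ makes explicit what the paper leaves implicit.
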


\begin{proof}
	Since $ \Sigma $-completion preserves cofiltered limits and the terminal object, it suffices to show that $ \Sigma $-completion preserves binary products of profinite spaces.
	Again because $ \Sigma $-completion preserves cofiltered limits, we are reduced to showing that if $ X $ and $ Y $ are \pifinite spaces, then the natural map
	\begin{equation*}
		\fromto{(X \cross Y)\Sigmacomp}{X\Sigmacomp \cross Y\Sigmacomp}
	\end{equation*}
	is an equivalence.
	Since \pifinite spaces admit $ \Setfin $-resolutions (\Cref{lem:finite_spaces_and_almost_pi-finite_spaces_admit_Setfin-resolutions}) and geometric realizations preserve finite products in $ \ProSpcfin $ and $ \ProSpcSigma $ (\Cref{cor:geometric_realizations_preserve_finite_products_protruncated_profinite}), the claim follows from \Cref{cor:localization_preserves_products_of_objects_with_iterated_resolutions}.
\end{proof}

\begin{warning}
	The functor $ (-)\Sigmacomp \colon \fromto{\ProSpcfin}{\ProSpcSigma} $ does not generally preserve pullbacks, or even loop objects.
\end{warning}


We now introduce a slight enlargement of the subcategory of protruncated spaces admitting a $ \ProSpcfin $-resolution on which profinite completion preserves finite products.

\begin{notation}
	Let
	\begin{equation*}
		\ProSpctrun' \subset \ProSpctrun
	\end{equation*}
	denote the smallest full subcategory containing $ \ProSpcfin $ and closed under geometric realizations, retracts, and cofiltered limits.
\end{notation}

\begin{observation}[(procompact spaces)]
	Write $ \Spc^{\upomega} \subset \Spc $ for the full subcategory spanned by the compact objects.%
	\footnote{A space $ X $ is compact if and only if $ X $ is a retract of a finite space.
	In more classical terminology, a space $ X $ is compact if and only if $ X $ is represented by a \textit{fintiely dominated} CW complex.}
	Then by \Cref{lem:finite_spaces_and_almost_pi-finite_spaces_admit_Setfin-resolutions}, the image of 
	\begin{equation*}
		\protrun \colon \fromto{\Pro(\Spc^{\upomega})}{\ProSpctrun}
	\end{equation*}
	is contained in $ \ProSpctrun' $.
\end{observation}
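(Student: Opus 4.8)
The plan is to verify the claim by peeling off, one at a time, the three closure properties used to define $\ProSpctrun'$ --- closure under cofiltered limits, retracts, and geometric realizations --- reducing successively to the case of a compact space, then to a finite space, and finally to an application of \Cref{lem:finite_spaces_and_almost_pi-finite_spaces_admit_Setfin-resolutions}.

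First I would use that every object of $\Pro(\Spccpt)$ is, by definition of the \procategory, a cofiltered limit of compact spaces. Since the protruncation functor $\protrun \colon \fromto{\ProSpc}{\ProSpctrun}$ preserves limits \cite[Proposition 3.9]{arXiv:2209.03476}, the protruncation of such a cofiltered limit is the cofiltered limit in $\ProSpctrun$ of the protruncations of the individual compact spaces; as $\ProSpctrun'$ is closed under cofiltered limits, it therefore suffices to show $\protrun(K) \in \ProSpctrun'$ for a single compact space $K$. Next, a compact space $K$ is a retract of a finite space $L$, so $\protrun(K)$ is a retract of $\protrun(L)$ (any functor preserves retracts); since $\ProSpctrun'$ is closed under retracts, this reduces us to the case of a finite space $L$. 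For this last case, part (3) of \Cref{lem:finite_spaces_and_almost_pi-finite_spaces_admit_Setfin-resolutions} says that $\protrun(L)$ admits a $\Setfin$-resolution, i.e., is a geometric realization --- computed in $\ProSpctrun$ --- of a simplicial object valued in $\Setfin$. Since a finite set is a \pifinite space, this simplicial object is in fact valued in $\ProSpcfin \subset \ProSpctrun$, and because $\ProSpctrun'$ contains $\ProSpcfin$ and is closed under geometric realizations, it contains $\protrun(L)$.

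I do not expect any real obstacle: the statement is a formal consequence of the closure properties built into $\ProSpctrun'$ together with \Cref{lem:finite_spaces_and_almost_pi-finite_spaces_admit_Setfin-resolutions}. The one point that warrants a moment's care is bookkeeping about where the colimits are computed --- in the definition of a $\Dcal$-resolution the geometric realization is taken in the ambient \category, which here is $\ProSpctrun$ --- so one should note that the $\Setfin$-resolution of $\protrun(L)$ supplied by \Cref{lem:finite_spaces_and_almost_pi-finite_spaces_admit_Setfin-resolutions} is already a geometric realization in $\ProSpctrun$ of a diagram landing in $\ProSpcfin$, which is precisely the type of diagram under which $\ProSpctrun'$ is closed.
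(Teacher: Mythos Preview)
Your argument is correct and is exactly the reasoning the paper leaves implicit: the observation is stated without a separate proof, invoking only \Cref{lem:finite_spaces_and_almost_pi-finite_spaces_admit_Setfin-resolutions} and the closure properties defining $\ProSpctrun'$, and you have spelled out precisely how these combine. There is nothing to add.
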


The following is the main result of this note.

\begin{theorem}\label{thm:profinite_completion_preserves_products_admitting_resolutions_by_simplicial_profinite_spaces}
	Let $ \Sigma $ be a set of primes and let $ X, Y \in \ProSpctrun' $.
	Then the natural map
	\begin{equation*}
		\fromto{(X \cross Y)\Sigmacomp}{X\Sigmacomp \cross Y\Sigmacomp} 
	\end{equation*}
	is an equivalence.
\end{theorem}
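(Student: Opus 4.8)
The plan is to bootstrap from \Cref{cor:Sigma-completion_preserves_products} by tracking the pairs of prospaces on which $ \Sigma $-completion preserves binary products and showing that this class is closed under all three of the operations that generate $ \ProSpctrun' $ out of $ \ProSpcfin $. Concretely, I would set $ L = (-)\Sigmacomp \colon \fromto{\ProSpctrun}{\ProSpcSigma} $ and let $ \mathcal{P} \subseteq \ProSpctrun \cross \ProSpctrun $ be the full subcategory spanned by the pairs $ (X,Y) $ for which the natural map $ \fromto{L(X \cross Y)}{L(X) \cross L(Y)} $ is an equivalence. The triple $ \Ccal = \ProSpctrun $, $ \Dcal = \ProSpcSigma $, $ L $ satisfies the hypotheses of \Cref{ntn:basic_assumptions}: geometric realizations preserve finite products in both categories by \Cref{cor:geometric_realizations_preserve_finite_products_protruncated_profinite}. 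Since the fully faithful inclusion $ \incto{\ProSpcfin}{\ProSpctrun} $ preserves finite products, \Cref{cor:Sigma-completion_preserves_products} gives $ \ProSpcfin \cross \ProSpcfin \subseteq \mathcal{P} $, and \Cref{cor:localization_preserves_products_of_objects_with_iterated_resolutions}(1) gives that $ \mathcal{P} $ is closed under geometric realizations and retracts.

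The one genuinely new thing to check is that $ \mathcal{P} $ is also closed under cofiltered limits, formed coordinatewise in $ \ProSpctrun \cross \ProSpctrun $. So suppose $ (X_i, Y_i)_{i \in I} $ is a cofiltered diagram valued in $ \mathcal{P} $, with $ X = \lim_{i \in I} X_i $ and $ Y = \lim_{i \in I} Y_i $. A finite product is itself a limit, so it commutes with the cofiltered limit over $ I $; combined with the fact that $ \Sigma $-completion preserves cofiltered limits (already used in the proof of \Cref{cor:Sigma-completion_preserves_products}), this gives
\begin{align*}
	L(X \cross Y) &\equivalent L\biggl( \lim_{i \in I} X_i \cross Y_i \biggr) \equivalent \lim_{i \in I} L(X_i \cross Y_i) \equivalent \lim_{i \in I} \bigl( L(X_i) \cross L(Y_i) \bigr) \\
	&\equivalent \Bigl( \lim_{i \in I} L(X_i) \Bigr) \cross \Bigl( \lim_{i \in I} L(Y_i) \Bigr) \equivalent L(X) \cross L(Y) \comma
\end{align*}
where the third equivalence is the hypothesis $ (X_i, Y_i) \in \mathcal{P} $ for each $ i $. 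Hence $ (X,Y) \in \mathcal{P} $, as needed.

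To finish, I would upgrade ``$ \mathcal{P} $ contains $ \ProSpcfin \cross \ProSpcfin $ and is closed under geometric realizations, retracts, and cofiltered limits'' to ``$ \mathcal{P} $ contains $ \ProSpctrun' \cross \ProSpctrun' $'' by the usual one-variable-at-a-time argument. Fix $ Y \in \ProSpcfin $. The full subcategory $ \set{X \in \ProSpctrun \suchthat (X,Y) \in \mathcal{P}} $ contains $ \ProSpcfin $, and it is closed under geometric realizations, retracts, and cofiltered limits: these are inherited from the corresponding closure properties of $ \mathcal{P} $, using that the geometric realization of a constant simplicial object and the cofiltered limit of a constant diagram both recover the constant value. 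Since $ \ProSpctrun' $ is by definition the smallest full subcategory of $ \ProSpctrun $ with these properties, this subcategory contains $ \ProSpctrun' $. Now fix $ X \in \ProSpctrun' $; by what was just shown, the full subcategory $ \set{Y \in \ProSpctrun \suchthat (X,Y) \in \mathcal{P}} $ contains $ \ProSpcfin $ and is again closed under the three operations for the same reason, hence contains $ \ProSpctrun' $. Thus $ (X,Y) \in \mathcal{P} $ for all $ X, Y \in \ProSpctrun' $, which is exactly the assertion of the theorem.

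Essentially everything here is formal once the results of \Cref{sec:universality_of_colimits,sec:completions_of_products} are available; the only substantive external input is that $ \Sigma $-completion preserves cofiltered limits, and it is precisely this that makes the cofiltered-limit step go through, so I expect that step to be the part most worth spelling out carefully. The one subtlety to flag is that this fact must be applied to the $ \Sigma $-completion functor on $ \ProSpctrun $, rather than merely to its restriction to $ \ProSpcfin $ that appears in \Cref{cor:Sigma-completion_preserves_products}; assuming that (as is standard in this setting, cf.\ the conventions of \cite{arXiv:2209.03476}), the argument above is complete.
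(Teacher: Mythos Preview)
Your proof is correct and follows essentially the same strategy as the paper, assembling the same three ingredients: \Cref{cor:Sigma-completion_preserves_products} for the base case, \Cref{cor:localization_preserves_products_of_objects_with_iterated_resolutions} for closure under geometric realizations and retracts, and preservation of cofiltered limits by completion for the remaining closure. The paper differs only in first reducing to the case where $\Sigma$ is the set of all primes by factoring $(-)\Sigmacomp$ through $(-)\profincomp$, whereas you work with general $\Sigma$ throughout and spell out the one-variable-at-a-time closure argument that the paper compresses into a single ``thus it suffices''.
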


\begin{proof}
	By \Cref{cor:Sigma-completion_preserves_products}, the $ \Sigma $-completion functor
	\begin{equation*}
		(-)\Sigmacomp \colon \fromto{\ProSpcfin}{\ProSpcSigma}
	\end{equation*}
	preserves products.
	Hence it suffices to prove the claim in the special case where $ \Sigma $ is the set of all primes.
	For this, note that cofiltered limits preserve finite products in $ \ProSpctrun $ and $ \ProSpcfin $; moreover, the profinite completion functor
	\begin{equation*}
		(-)\profincomp \colon \fromto{\ProSpctrun}{\ProSpcfin}
	\end{equation*}
	preserves cofiltered limits.
	Thus it suffices to treat the case where $ X $ and $ Y $ are in the smallest full subcategory of $ \ProSpctrun $ containing $ \ProSpcfin $ and closed under geometric realizations and retracts.
	In this case, since geometric realizations preserve finite products in $ \ProSpctrun $ and $ \ProSpcfin $ (\Cref{cor:geometric_realizations_preserve_finite_products_protruncated_profinite}), \Cref{cor:localization_preserves_products_of_objects_with_iterated_resolutions} completes the proof.
\end{proof}

\begin{example}
	If $ X $ and $ Y $ are protruncated spaces that admit $ \ProSpcfin $-resolutions, then the natural map $ \fromto{(X \cross Y)\Sigmacomp}{X\Sigmacomp \cross Y\Sigmacomp} $ is an equivalence.
\end{example}

\begin{example}
	If $ X,Y \in \Spc $ are compact, then the natural map $ \fromto{(X \cross Y)\Sigmacomp}{X\Sigmacomp \cross Y\Sigmacomp} $ is an equivalence.
\end{example}

We conclude by recording two applications of \Cref{thm:profinite_completion_preserves_products_admitting_resolutions_by_simplicial_profinite_spaces}.

\begin{corollary}\label{cor:profinite_completion_preserves_products_of_shapes_of_spectral_topoi}
	Let $ \Sigma $ be a set of primes and let $ \X $ and $ \Y $ be spectral \topoi.
	Then the natural map
	\begin{equation*}
		\fromto{\paren{\Shape(\X) \cross \Shape(\Y)}\Sigmacomp}{\Shape(\X)\Sigmacomp \cross \Shape(\Y)\Sigmacomp} 
	\end{equation*}
	is an equivalence.
\end{corollary}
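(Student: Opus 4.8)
The plan is to deduce this from \Cref{thm:profinite_completion_preserves_products_admitting_resolutions_by_simplicial_profinite_spaces} by first passing to protruncated shapes. The key point is that the $ \Sigma $-completion functor factors through the protruncation functor $ \protrun \colon \fromto{\ProSpc}{\ProSpctrun} $, and that protruncation preserves limits \cite[Proposition 3.9]{arXiv:2209.03476}, hence in particular preserves finite products. Consequently, writing $ \Shapeprotrun(\X) $ and $ \Shapeprotrun(\Y) $ for the protruncated shapes, the natural map under consideration is identified with the natural map
\begin{equation*}
	\fromto{\paren{\Shapeprotrun(\X) \cross \Shapeprotrun(\Y)}\Sigmacomp}{\Shapeprotrun(\X)\Sigmacomp \cross \Shapeprotrun(\Y)\Sigmacomp} \period
\end{equation*}

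So it remains to check that $ \Shapeprotrun(\X) $ and $ \Shapeprotrun(\Y) $ satisfy the hypotheses of \Cref{thm:profinite_completion_preserves_products_admitting_resolutions_by_simplicial_profinite_spaces}, i.e., that they lie in $ \ProSpctrun' $. This is exactly what \Cref{ex:protruncated_shapes_of_spectral_topoi_admit_profinite_resolutions} provides: the protruncated shape of a spectral \topos admits a $ \ProSpcfin $-resolution, so in particular it lies in $ \ProSpctrun' $. With this in hand, applying \Cref{thm:profinite_completion_preserves_products_admitting_resolutions_by_simplicial_profinite_spaces} with $ X = \Shapeprotrun(\X) $ and $ Y = \Shapeprotrun(\Y) $ finishes the argument.

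The only nonformal ingredient is \Cref{ex:protruncated_shapes_of_spectral_topoi_admit_profinite_resolutions} itself, i.e., the fact that the protruncated shape of a spectral \topos can be realized as the protruncated classifying space of an explicit profinite category --- equivalently, as a geometric realization of a simplicial profinite space computed in $ \ProSpctrun $. This rests on the description of protruncated shapes from \cite[Theorems 10.2.3 \& 12.5.1]{arXiv:1807.03281}, and I expect assembling this presentation (the content of \Cref{app:classifying_prospaces_via_geometric realizations}) to be the real work. Once it is available, the corollary is a short chain of adjunctions together with the already-established universality of geometric realizations in $ \ProSpctrun $ and $ \ProSpcSigma $ (\Cref{cor:geometric_realizations_preserve_finite_products_protruncated_profinite}); no further input beyond \Cref{thm:profinite_completion_preserves_products_admitting_resolutions_by_simplicial_profinite_spaces} is needed.
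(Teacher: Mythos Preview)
Your proposal is correct and follows essentially the same route as the paper: reduce to the protruncated shapes using that protruncation preserves products, invoke \Cref{ex:protruncated_shapes_of_spectral_topoi_admit_profinite_resolutions} to see these admit $\ProSpcfin$-resolutions, and apply \Cref{thm:profinite_completion_preserves_products_admitting_resolutions_by_simplicial_profinite_spaces}. The paper's proof is terser but structurally identical.
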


\begin{proof}
	\Cref{ex:protruncated_shapes_of_spectral_topoi_admit_profinite_resolutions} shows that $ \Shapetrun(\X) $ and $ \Shapetrun(\Y) $ admit $ \ProSpcfin $-resolutions.
	Since protruncation preserves products, the claim now follows from \Cref{thm:profinite_completion_preserves_products_admitting_resolutions_by_simplicial_profinite_spaces}.
\end{proof}

\begin{example}\label{ex:profinite_completion_preserves_products_of_etale_homotopy_types}
	Let $ \Sigma $ be a set of primes and let $ X $ and $ Y $ be qcqs schemes.
	Then the natural map
	\begin{equation*}
		\fromto{\paren{\Piet(X) \cross \Piet(Y)}\Sigmacomp}{\Piet(X)\Sigmacomp \cross \Piet(Y)\Sigmacomp} 
	\end{equation*}
	is an equivalence.
\end{example}

The second application is to \operads.
Recently, it has become clear that it is important to consider profinite completions of \operads.
For example, Boavida de Brito, Horel, and Robertson explained a beautiful relationship between the Grothendieck--Teichmüller group and the profinite completion of the genus $ 0 $ surface operad \cite{MR3921321}.
However, since profinite completion does not preserve products, profinite completions of \operads generally cannot be computed `levelwise'.
In general, more care is required to work with profinite completions of \operads; this is the topic of recent work of Blom and Moerdijk \cites{MR4462939}{arXiv:2312.12567}.

An immediate consequence of \Cref{thm:profinite_completion_preserves_products_admitting_resolutions_by_simplicial_profinite_spaces} is that profinite completions \textit{can} be computed levelwise more generally than was previously known:

\begin{corollary}\label{cor:profinite_completions_of_operads}
	Let $ \Sigma $ be a set of primes and let $ \{\Ocal(n)\}_{n \geq 0} $ be an \operad in spaces.
	Assume that for each $ n \geq 0 $, we have
	\begin{equation*}
		\protrun \Ocal(n) \in \ProSpctrun' \period
	\end{equation*}
	Then the levelwise \Sigmacompletion $ \{\Ocal(n)\Sigmacomp \}_{n \geq 0} $ defines an \operad in $ \ProSpcSigma $.
\end{corollary}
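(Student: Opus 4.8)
The plan is to reduce the statement to the single fact that $ \Sigma $-completion preserves \emph{finite} products of objects of $ \ProSpctrun' $, and then transport the operad structure along that functor by a model-independent argument. So the first thing I would prove is a closure lemma: the full subcategory $ \ProSpctrun' \subseteq \ProSpctrun $ is closed under finite products, and the restriction $ (-)\Sigmacomp \colon \ProSpctrun' \to \ProSpcSigma $ preserves finite products. Closure under finite products holds because $ \ProSpcfin $ is closed under finite products and each of the three operations used to build $ \ProSpctrun' $ out of $ \ProSpcfin $ commutes with finite products in $ \ProSpctrun $: geometric realizations by \Cref{cor:geometric_realizations_preserve_finite_products_protruncated_profinite}, retracts trivially, and cofiltered limits because cofiltered limits preserve finite products; a two-variable bootstrap then upgrades this to closure under binary, hence all finite, products, which are therefore computed as in $ \ProSpctrun $. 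Given this, preservation of finite products follows from \Cref{thm:profinite_completion_preserves_products_admitting_resolutions_by_simplicial_profinite_spaces}: the empty product is the point, which lies in $ \ProSpcSigma $ and so is fixed by the localization; the binary case is exactly \Cref{thm:profinite_completion_preserves_products_admitting_resolutions_by_simplicial_profinite_spaces}; and the $ n $-ary case follows by induction, using the closure just established so that \Cref{thm:profinite_completion_preserves_products_admitting_resolutions_by_simplicial_profinite_spaces} applies at each step.

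Next I would fix a precise meaning for ``\operad internal to \acategory $ \Dcal $ with finite products'' --- say as a $ \Dcal $-valued Segal dendroidal object with terminal value on the unit tree, as in the work of Cisinski--Moerdijk and Heuts--Moerdijk, or any equivalent model --- and isolate the only two formal properties I need. First: a finite-product-preserving functor $ F \colon \Ccal \to \Dcal $ between categories with finite products induces a functor on operads, carrying an operad with $ n $-ary operations $ P(n) $ to one with $ n $-ary operations $ F(P(n)) $. Second: if $ \Dcal_0 \subseteq \Dcal $ is a full subcategory closed under finite products, then an operad in $ \Dcal $ whose operation objects all lie in $ \Dcal_0 $ is canonically an operad in $ \Dcal_0 $. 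Both are immediate from the Segal description because, for a \emph{single-colour} operad --- which is what $ \{\Ocal(n)\}_{n \geq 0} $ is --- the unit tree goes to a terminal object and so every Segal condition degenerates to an identification of a tree's value with a \emph{finite product} of the operation objects sitting at its vertices; in particular no fibre product over a colour object, which $ \Sigma $-completion would fail to preserve (just as it fails to preserve loop objects), ever enters.

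With these in hand the assembly is immediate. The composite $ \Spc \hookrightarrow \ProSpc \xrightarrow{\ \protrun\ } \ProSpctrun $ preserves finite products (the inclusion preserves finite limits, and $ \protrun $ preserves all limits by \cite[Proposition 3.9]{arXiv:2209.03476}), so by the first property it carries the operad $ \Ocal $ in $ \Spc $ to an operad $ \protrun \Ocal $ in $ \ProSpctrun $ with $ n $-ary operations $ \protrun \Ocal(n) $. By hypothesis $ \protrun \Ocal(n) \in \ProSpctrun' $ for every $ n $, so by the second property $ \protrun \Ocal $ is (canonically) an operad in $ \ProSpctrun' $. Applying the first property to the finite-product-preserving functor $ (-)\Sigmacomp \colon \ProSpctrun' \to \ProSpcSigma $ of the closure lemma produces an operad in $ \ProSpcSigma $ whose $ n $-ary operations are $ (\protrun \Ocal(n))\Sigmacomp = \Ocal(n)\Sigmacomp $, which is the assertion.

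The only step where anything actually happens is the closure lemma, and there the real content is \Cref{thm:profinite_completion_preserves_products_admitting_resolutions_by_simplicial_profinite_spaces}; the accompanying observation that $ \ProSpctrun' $ is closed under finite products is easy but genuinely needed, since the Segal conditions for large trees involve products of many copies of the $ \protrun \Ocal(n) $, across which \Cref{thm:profinite_completion_preserves_products_admitting_resolutions_by_simplicial_profinite_spaces} has to be iterated. The remaining work is not mathematical but expository: committing to a precise model of ``\operad in $ \ProSpcSigma $'' and checking it enjoys the two formal properties above --- routine, but worth doing with care.
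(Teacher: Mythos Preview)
Your proposal is correct and follows the same route the paper intends: the paper states the corollary without proof, presenting it as ``an immediate consequence of \Cref{thm:profinite_completion_preserves_products_admitting_resolutions_by_simplicial_profinite_spaces}.'' You have simply filled in what the paper leaves implicit, including the closure of $\ProSpctrun'$ under finite products (needed to iterate \Cref{thm:profinite_completion_preserves_products_admitting_resolutions_by_simplicial_profinite_spaces} across $n$-ary products) and the observation that for a single-colour \operad the Segal conditions involve only finite products rather than pullbacks, so the warning after \Cref{cor:Sigma-completion_preserves_products} does not bite.
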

	

\appendix

\section{Classifying prospaces via geometric realizations}\label{app:classifying_prospaces_via_geometric realizations}

The purpose of this appendix is to explain why the protruncated shape of a spectral \topos (e.g., the étale \topos of a qcqs scheme) admits a presentation as a geometric realization of a simplicial profinite space (\Cref{ex:protruncated_shapes_of_spectral_topoi_admit_profinite_resolutions}). 
Using the description of the protruncated shape of a spectral \topos as a protruncated classifying prospace given by Barwick--Glasman--Haine \cite[Theorem 10.2.3]{arXiv:1807.03281}, this is an exercise in the definitions.
For the ease of the reader, we spell out the details here.

We make use of the description of \categories as simplicial spaces.

\begin{recollection}[{(\categories as simplicial spaces)}]\label{rec:complete_Segal_spaces}
	The nerve functor
	\begin{align*}
		\Nerve \colon \Catinfty &\to \Fun(\Deltaop,\Spc) \\ 
		\Ccal &\mapsto \brackets{I \mapsto \Fun(I,\Ccal)^{\equivalent}}
	\end{align*}
	is fully faithful \cites[\HAappthm{Proposition}{A.7.10}]{HA}[\SAGsubsec{A.8.2}]{SAG}{MR2342834}[\S1]{Lurie:Goodwillie-I}{MR1804411}.
	One can explicitly identify its image; objects in the image of this embedding are often called \textit{complete Segal spaces} or \textit{categories internal to spaces}.
	Under this embedding, the subcategory $ \Spc \subset \Catinfty $ corresponds to the constant functors $ \fromto{\Deltaop}{\Spc} $.
	Moreover, the localization $ \Bup \colon \fromto{\Catinfty}{\Spc} $ is given by geometric realization.
\end{recollection}

\begin{notation}
	Let $ \Ccal $ be \acategory with finite limits.
	We write 
	\begin{equation*}
		\Catob(\Ccal) \subset \Fun(\Deltaop,\Ccal)
	\end{equation*}
	for the full subcategory spanned by the \textit{categories internal to $ \Ccal $}.
	See \cites[Definition 13.1.1]{arXiv:1807.03281}[Proposition 3.2.7]{arXiv:2103.17141} for the definition.
\end{notation}

\begin{notation}
	Write $ \Cattrun \subset \Catinfty $ for the full subcategory spanned by those \categories $ \Ccal $ for which there exists an integer $ n \geq 0 $ such that $ \Ccal $ is an $ n $-category. 
	Write $ \Catfin \subset \Cattrun $ for the full subcategory spanned by those \categories with the property that there are finitely many objects up to equivalence and all mapping spaces are \pifinite.
\end{notation}

\begin{observation}
	The nerve $ \Nerve \colon \equivto{\Catinfty}{\Catob(\Spc)} $ restricts to equivalences
	\begin{equation*}
		\equivto{\Cattrun}{\Catob(\Spctrun)} \andeq \equivto{\Catfin}{\Catob(\Spcfin)} \period
	\end{equation*}
\end{observation}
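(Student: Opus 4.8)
The plan is to use that, by the results recalled above, $ \Nerve $ is fully faithful with essential image $ \Catob(\Spc) $; so it suffices to identify, inside $ \Catob(\Spc) = \Catinfty $, the preimages of the full subcategories $ \Catob(\Spctrun) $ and $ \Catob(\Spcfin) $. First I would observe that $ \Spctrun $ and $ \Spcfin $ are closed under finite limits in $ \Spc $, so each is finitely complete and its inclusion into $ \Spc $ preserves and reflects finite limits. The conditions defining a category internal to a finitely complete \category --- the Segal condition and completeness --- are expressed purely in terms of finite limits, hence are preserved and reflected by such an inclusion; therefore a simplicial object of $ \Spctrun $ (resp.\ $ \Spcfin $) is a category internal to $ \Spctrun $ (resp.\ $ \Spcfin $) if and only if, as a simplicial space, it is a category internal to $ \Spc $. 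In other words $ \Catob(\Spctrun) = \Catob(\Spc) \cap \Fun(\Deltaop,\Spctrun) $ and $ \Catob(\Spcfin) = \Catob(\Spc) \cap \Fun(\Deltaop,\Spcfin) $, so the task reduces to proving that $ \Nerve(\Ccal) $ is levelwise truncated if and only if $ \Ccal \in \Cattrun $, and levelwise \pifinite if and only if $ \Ccal \in \Catfin $.

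For this I would use that $ \Nerve(\Ccal)_0 = \Ccal^{\simeq} $, that $ \Nerve(\Ccal)_1 = \Fun([1],\Ccal)^{\simeq} $, and that the mapping space $ \mathrm{Map}_{\Ccal}(x,y) $ is the fiber of the map $ \Nerve(\Ccal)_1 \to \Nerve(\Ccal)_0 \cross \Nerve(\Ccal)_0 $ over $ (x,y) $, together with the fact that $ \Ccal $ is an $ n $-category if and only if all of its mapping spaces are $ (n-1) $-truncated, in which case the core $ \Ccal^{\simeq} $ is $ n $-truncated \cite[\S 2.3.4]{HTT}. If $ \Ccal $ is an $ n $-category then $ \Ccal^{\simeq} $ is truncated, and $ \Nerve(\Ccal)_1 $, being the total space of a fiber sequence over $ \Ccal^{\simeq} \cross \Ccal^{\simeq} $ with $ (n-1) $-truncated fibers, is truncated; by the Segal condition every $ \Nerve(\Ccal)_m $ is a finite limit of copies of $ \Nerve(\Ccal)_1 $ over $ \Nerve(\Ccal)_0 $, hence truncated, since truncated spaces are closed under limits in $ \Spc $ \cite[\S 5.5.6]{HTT}. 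Conversely, if $ \Nerve(\Ccal) $ is levelwise truncated, then each $ \mathrm{Map}_{\Ccal}(x,y) $, being a fiber of a map between truncated spaces, is truncated, with a bound on its truncatedness independent of $ x $ and $ y $ supplied by the long exact sequence of homotopy groups; hence $ \Ccal $ is an $ n $-category for some $ n $, i.e.\ $ \Ccal \in \Cattrun $.

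For the \pifinite case I would use that \pifinite spaces are likewise closed under finite limits in $ \Spc $ (and are truncated), so the preceding argument applies with ``\pifinite'' in place of ``truncated'', once one notes that $ \Ccal^{\simeq} $ is \pifinite precisely when $ \Ccal $ has finitely many objects up to equivalence and, for each object $ x $, the automorphism space $ \Omega_x(\Ccal^{\simeq}) $ --- a union of connected components of $ \mathrm{Map}_{\Ccal}(x,x) $ --- is \pifinite. This identifies the levelwise-\pifinite complete Segal spaces with the $ \Nerve(\Ccal) $ for which $ \Ccal^{\simeq} $ and every mapping space of $ \Ccal $ is \pifinite, i.e.\ with $ \Catfin $; the hypothesis $ \Ccal \in \Cattrun $ in the definition of $ \Catfin $ is automatic here, since finitely many objects up to equivalence together with \pifinite, hence truncated, mapping spaces force a uniform truncation bound. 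I expect the only step carrying real weight to be the opening reduction --- that ``category internal to $ \Ccal $'' is a finite-limit condition and so restricts correctly along $ \Spctrun, \Spcfin \hookrightarrow \Spc $ --- with everything afterward being routine bookkeeping with fiber sequences and the Segal condition.
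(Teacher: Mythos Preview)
Your argument is correct. The paper states this as an \emph{observation} with no proof at all; it is treated as immediate from the definitions of $\Cattrun$, $\Catfin$, and category objects. What you have written is therefore not a different route from the paper's proof but rather an actual proof where the paper supplies none, and your decomposition---reduce to a finite-limit condition on the ambient category, then analyze $\Nerve(\Ccal)_0$ and $\Nerve(\Ccal)_1$ via the fiber sequence with mapping-space fibers---is exactly the standard unwinding one would expect.

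One small wording slip: you write that ``truncated spaces are closed under limits in $\Spc$'', but $\Spctrun$ is only closed under \emph{finite} limits (an infinite product of $n$-truncated spaces with $n$ unbounded need not be truncated). This does not affect your argument, since the Segal condition only involves finite limits and that is all you use; but the sentence should say ``finite limits''. Similarly, your appeal to a ``uniform bound'' in the converse direction for $\Cattrun$ is correct and worth making explicit: the bound comes from the truncation level of $\Nerve(\Ccal)_1$, since a fiber of an $n$-truncated space is $n$-truncated regardless of the base.
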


In order to describe protruncated classifying spaces via geometric realizations, it is useful to describe \procategories as category objects in prospaces.

\begin{observation}
	By \cites[\HTTthm{Proposition}{5.3.5.11}]{HTT}[Proposition 13.1.12]{arXiv:1807.03281}, the composite
	\begin{equation*}
		\begin{tikzcd}
			\Cattrun \arrow[r, "\Nerve"', "\sim"{yshift=-0.25em}] & \Catob(\Spctrun) \arrow[r, hooked] & \Catob(\ProSpctrun)
		\end{tikzcd}
	\end{equation*}
	extends along cofiltered limits to a fully faithful right adjoint
	\begin{equation*}
		\Nerve \colon \incto{\Pro(\Cattrun)}{\Catob(\ProSpctrun)} \period
	\end{equation*}
	This functor restricts to a fully faithful right adjoint 
	\begin{equation*}
		\Nerve \colon \incto{\Pro(\Catfin)}{\Catob(\ProSpcfin)} \period
	\end{equation*}
\end{observation}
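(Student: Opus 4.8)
The plan is to bootstrap from the equivalence $ \Nerve \colon \equivto{\Cattrun}{\Catob(\Spctrun)} $ of the preceding observation, together with the universal property of pro-completion (\HTT{Proposition}{5.3.5.11}) and the comparison between pro-categories and categories internal to prospaces (\cite[Proposition 13.1.12]{arXiv:1807.03281}). First I would check that $ \Catob(\ProSpctrun) $ admits small cofiltered limits and that these are computed pointwise in $ \Fun(\Deltaop, \ProSpctrun) $: the Segal and completeness conditions cutting out $ \Catob(\ProSpctrun) $ inside $ \Fun(\Deltaop, \ProSpctrun) $ each assert that some comparison map assembled from limits of the $ X_n $ is an equivalence, and limits commute with limits, so these conditions are stable under pointwise cofiltered limits. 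Since the fully faithful functor $ \fromto{\Spctrun}{\ProSpctrun} $ preserves finite limits, it induces a fully faithful functor $ \fromto{\Catob(\Spctrun)}{\Catob(\ProSpctrun)} $ landing in a category that admits cofiltered limits, so the universal property of $ \Pro(-) $ (applied to the opposite category) produces an essentially unique cofiltered-limit-preserving extension $ \fromto{\Pro(\Catob(\Spctrun))}{\Catob(\ProSpctrun)} $. Precomposing with the equivalence $ \Pro(\Cattrun) \equivalence \Pro(\Catob(\Spctrun)) $ obtained by applying $ \Pro(-) $ to the preceding observation gives the functor $ \Nerve \colon \fromto{\Pro(\Cattrun)}{\Catob(\ProSpctrun)} $.

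Next I would prove full faithfulness. Applying $ \Pro(-) $ to an equivalence produces an equivalence, so it remains to see that $ \fromto{\Pro(\Catob(\Spctrun))}{\Catob(\ProSpctrun)} $ is fully faithful. Unwinding mapping spaces, this amounts to the assertion that every category object valued in $ \Spctrun $ is a \emph{cocompact} object of $ \Catob(\ProSpctrun) $, i.e.\ that mapping into it carries cofiltered limits of category objects to filtered colimits. Here two inputs combine: each level of an object of $ \Catob(\Spctrun) $ is a truncated space, hence a cocompact object of $ \ProSpctrun = \Pro(\Spctrun) $ by the definition of $ \Pro(-) $; and the Segal condition pins a morphism of category objects down to finitely much simplicial data, so that the mapping space is computed by a \emph{finite} limit of mapping spaces of prospaces, which commutes with the filtered colimits in question. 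Carrying this out carefully --- and, crucially, identifying the essential image and exhibiting its reflector, which is what upgrades ``fully faithful and limit-preserving'' to ``fully faithful right adjoint'' --- is exactly \cite[Proposition 13.1.12]{arXiv:1807.03281}, so at this point I would cite it rather than redo it.

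For the second sentence of the statement, the preceding observation also provides $ \Nerve \colon \equivto{\Catfin}{\Catob(\Spcfin)} $. The full subcategory $ \ProSpcfin = \Pro(\Spcfin) $ is closed under cofiltered limits inside $ \ProSpctrun = \Pro(\Spctrun) $ --- a cofiltered limit of cofiltered systems of $ \uppi $-finite spaces reindexes to a single such system --- and cofiltered limits of category objects are computed levelwise, so $ \Catob(\ProSpcfin) \subset \Catob(\ProSpctrun) $ is closed under cofiltered limits and contains $ \Catob(\Spcfin) $. Since $ \Pro(\Catfin) $ is generated under cofiltered limits by $ \Catfin $ and $ \Nerve $ preserves cofiltered limits, $ \Nerve $ carries $ \Pro(\Catfin) $ into $ \Catob(\ProSpcfin) $; full faithfulness is inherited from the unrestricted statement, and the reflector restricts (alternatively, one reruns the argument of \cite[Proposition 13.1.12]{arXiv:1807.03281} in the $ \uppi $-finite setting).

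The main obstacle is the full faithfulness with reflective image of $ \fromto{\Pro(\Catob(\Spctrun))}{\Catob(\ProSpctrun)} $; everything else --- the universal property of $ \Pro(-) $, the pointwise computation of cofiltered limits of category objects, and the stability of the $ \uppi $-finite subcategory --- is formal. The subtlety is that the analogous comparison $ \fromto{\Pro(\Fun(\Deltaop, \Spctrun))}{\Fun(\Deltaop, \ProSpctrun)} $ of \emph{all} simplicial objects fails to be fully faithful, because the (infinite) limit over the twisted arrow category of $ \Deltaop $ computing a mapping space does not commute with filtered colimits; it is the Segal and completeness conditions that reduce this computation to finitary data, and carrying out that reduction is the substance of \cite[Proposition 13.1.12]{arXiv:1807.03281}.
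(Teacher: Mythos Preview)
Your proposal is correct and aligns with the paper's approach: the paper treats this as an observation following directly from the two cited references, and you have simply unpacked what those citations furnish---the universal property of $\Pro(-)$ from \HTT{Proposition}{5.3.5.11} to build the extension, and \cite[Proposition 13.1.12]{arXiv:1807.03281} to supply full faithfulness together with the reflector. Your additional commentary on why the Segal and completeness conditions are essential (and why the analogous statement for arbitrary simplicial objects would fail) is accurate and helpful context, though the paper leaves all of this implicit in the citations.
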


\begin{remark}
	We do not know if the embedding $ \Nerve \colon \incto{\Catinfty}{\Cat(\ProSpc)} $ extends along cofiltered limits to a fully faithful functor $ \fromto{\Pro(\Catinfty)}{\Catob(\ProSpc)} $.
\end{remark}


\begin{observation}
	It is immediate from the definitions that the following diagram of fully faithful right adjoints commutes
	\begin{equation*}
		\begin{tikzcd}[sep=3em]
			\ProSpcfin \arrow[d, phantom, "\scriptscriptstyle\shortlefttack" description] \arrow[r, phantom, "\scriptscriptstyle\shortuptack" description] \arrow[r, hooked, shift right] \arrow[d, hooked, shift left] & %
			\ProCatfin \arrow[d, phantom, "\scriptscriptstyle\shortlefttack" description] \arrow[r, phantom, "\scriptscriptstyle\shortuptack" description] \arrow[l, "\Bprofin"', shift right] \arrow[r, hooked, "\Nerve"', shift right] \arrow[d, hooked, shift left] & %
			\Catob(\ProSpcfin) \arrow[d, phantom, "\scriptscriptstyle\shortlefttack" description] \arrow[r, phantom, "\scriptscriptstyle\shortuptack" description] \arrow[l, shift right] \arrow[r, hooked, shift right] \arrow[d, hooked, shift left] & %
			\Fun(\Deltaop, \ProSpcfin) \arrow[d, phantom, "\scriptscriptstyle\shortlefttack" description] \arrow[l, shift right]  \arrow[d, hooked, shift left] \\
			\ProSpctrun \arrow[r, phantom, "\scriptscriptstyle\shortuptack" description] \arrow[r, hooked, shift right] \arrow[u, shift left, "(-)\profincomp"] & %
			\ProCattrun \arrow[r, phantom, "\scriptscriptstyle\shortuptack" description] \arrow[l, "\Btrun"', shift right] \arrow[r, hooked, "\Nerve"', shift right] \arrow[u, shift left] & %
			\Catob(\ProSpctrun) \arrow[r, phantom, "\scriptscriptstyle\shortuptack" description] \arrow[l, shift right]  \arrow[r, hooked, shift right] \arrow[u, shift left] & %
			\Fun(\Deltaop, \ProSpctrun) \period \arrow[l, shift right] \arrow[u, shift left, "(-)\profincomp \of -"]
		\end{tikzcd}
	\end{equation*}
	The long composite left adjoints
	\begin{equation*}
		\fromto{\Fun(\Deltaop,\ProSpctrun)}{\ProSpctrun} \andeq \fromto{\Fun(\Deltaop,\ProSpcfin)}{\ProSpcfin} \period
	\end{equation*}
	are simply the colimit functors.
	Since the diagram of left adjoints also commutes we deduce:
\end{observation}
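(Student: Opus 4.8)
The plan is to verify commutativity of the displayed diagram of right adjoints one square at a time, pass to left adjoints by uniqueness of adjoints, and then identify the long composite right adjoints as constant-diagram functors so that their left adjoints are the colimit functors; the stated deductions about $\Btrun$, $\Bprofin$, and profinite completion then fall out by reading off the resulting diagram of left adjoints.

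First I would check the three horizontal squares. The leftmost square, relating the inclusions $\ProSpcfin \hookrightarrow \ProCatfin$ and $\ProSpctrun \hookrightarrow \ProCattrun$, commutes because all four functors are fully faithful inclusions and, under the identifications recorded just above, $\Spcfin = \Spctrun \cap \Catfin$ inside $\Cattrun$; this identity is preserved upon passage to pro-categories since the relevant inclusions are fully faithful and pullbacks in pro-categories are computed levelwise. The middle square commutes because the nerve $\Nerve\colon \Cattrun \equivto \Catob(\Spctrun)$ restricts to $\Nerve\colon \Catfin \equivto \Catob(\Spcfin)$, and both pro-extensions are produced by the same recipe of extension along cofiltered limits along the fully faithful inclusions $\Catfin \subset \Cattrun$ and $\Catob(\Spcfin) \subset \Catob(\Spctrun)$; by \HTT{Proposition}{5.3.5.11} and \cite[Proposition 13.1.12]{arXiv:1807.03281} this extension is compatible with those inclusions. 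The rightmost square commutes because $\Catob(-)$ is cut out of $\Fun(\Deltaop,-)$ by the same Segal and completeness conditions regardless of the coefficient category, and the vertical functor $\Fun(\Deltaop,\ProSpcfin) \hookrightarrow \Fun(\Deltaop,\ProSpctrun)$ is just the inclusion $\ProSpcfin \hookrightarrow \ProSpctrun$ applied in each simplicial degree. Since each of these squares of right adjoints commutes, the associated square of left adjoints commutes as well: the composite of the left adjoints along one side is left adjoint to the composite of the right adjoints along the other, and left adjoints are unique up to canonical equivalence. This gives the claimed commuting diagram of left adjoints.

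It remains to identify the two long composite right adjoints $\ProSpcfin \to \Fun(\Deltaop,\ProSpcfin)$ and $\ProSpctrun \to \Fun(\Deltaop,\ProSpctrun)$. By \Cref{rec:complete_Segal_spaces}, under $\Nerve\colon \Catinfty \equivto \Catob(\Spc)$ the subcategory $\Spc \subset \Catinfty$ corresponds to the constant simplicial spaces; hence on $\Spcfin$ (respectively $\Spctrun$) the composite of $\Spcfin \hookrightarrow \Catfin$, the nerve, and $\Catob(\Spcfin) \hookrightarrow \Fun(\Deltaop,\Spcfin)$ is the constant-diagram functor, and extending along cofiltered limits — which each functor in sight does — the composite $\ProSpcfin \to \Fun(\Deltaop,\ProSpcfin)$ is the constant-diagram functor too (similarly for the protruncated row). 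Since the left adjoint of the constant-diagram functor is $\colim_{\Deltaop}$, the composite left adjoints are the colimit functors, as asserted. Reading this off together with the commuting diagram of left adjoints yields the deductions: for $\Ccal \in \ProCatfin$ one gets $\Bprofin(\Ccal) \equivalent \colim_{\Deltaop}\Nerve(\Ccal)$ in $\ProSpcfin$ (using that $\Nerve$ is a fully faithful right adjoint, so the realization inverts it), and likewise $\Btrun(\Ccal) \equivalent \colim_{\Deltaop}\Nerve(\Ccal)$ in $\ProSpctrun$, compatibly with $(-)\profincomp$; combined with the description of $\Shapetrun$ of a spectral \topos as such a $\Btrun$ this is exactly \Cref{ex:protruncated_shapes_of_spectral_topoi_admit_profinite_resolutions}.

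The main obstacle is bookkeeping rather than mathematics: one must make sure the pro-extensions of the nerve functors and of the completion functors are genuinely compatible across the two rows, i.e.\ that ``extension along cofiltered limits'' commutes with restriction to the fully faithful subcategories in play, so that the squares commute coherently and not merely on objects. This is precisely where the fully-faithfulness of every horizontal and vertical arrow, together with the levelwise computation of limits in pro-categories, does the work, but it has to be tracked carefully rather than invoked blindly.
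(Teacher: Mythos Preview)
Your proposal is correct and essentially coincides with the paper's treatment: the paper declares the commutativity ``immediate from the definitions'' and gives no further argument, while you have simply unpacked that assertion by checking each square and identifying the long composite right adjoint as the constant-diagram functor. There is nothing to compare beyond level of detail.
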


\begin{corollary}\label{cor:presentation_of_classifying_protruncated_spaces_of_profinite_categories}
	Let $ \Ccal \in \ProCatfin $ be a profinite \category.
	Then there are natural equivalences
	\begin{align*}
		\Btrun(\Ccal) &\equivalent
		\colim\left\lparen\!
		\begin{tikzcd}[sep=3em, ampersand replacement=\&]
			\Deltaop \arrow[r, "\Nerve(\Ccal)"] \& \ProSpcfin \arrow[r, hooked] \& \ProSpctrun
		\end{tikzcd}
		\!\right\rparen \\ 
		\shortintertext{and}
		\Bprofin(\Ccal) &\equivalent
		\colim\left\lparen\!
		\begin{tikzcd}[sep=3em, ampersand replacement=\&]
			\Deltaop \arrow[r, "\Nerve(\Ccal)"] \& \ProSpcfin
		\end{tikzcd}
		\!\right\rparen \period 
	\end{align*}
\end{corollary}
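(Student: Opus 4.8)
The corollary is a purely formal consequence of the commutative diagram of adjunctions displayed in the preceding observation, together with the elementary fact that for a localization $ L \colon \fromto{\Ccal}{\Dcal} $ — i.e., a left adjoint to a fully faithful functor $ \iota \colon \incto{\Dcal}{\Ccal} $ — the unit $ \fromto{X}{\iota L(X)} $ is an equivalence whenever $ X $ lies in the essential image of $ \iota $, so that $ L(X) $ recovers the corresponding object of $ \Dcal $. The plan is to feed the nerve $ \Nerve(\Ccal) $ along each of the two horizontal rows of the diagram and watch these localizations collapse.

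For the second equivalence I would work entirely in the top row. The composite of the two fully faithful functors $ \Nerve \colon \incto{\ProCatfin}{\Catob(\ProSpcfin)} $ and $ \incto{\Catob(\ProSpcfin)}{\Fun(\Deltaop,\ProSpcfin)} $ presents the profinite \category $ \Ccal $ as the simplicial profinite space $ \Nerve(\Ccal) \in \Fun(\Deltaop,\ProSpcfin) $. By the preceding observation, the long composite of the three right-to-left arrows of the top row is the colimit functor $ \colim_{\Deltaop} \colon \fromto{\Fun(\Deltaop,\ProSpcfin)}{\ProSpcfin} $, and it factors as the two localizations $ \fromto{\Fun(\Deltaop,\ProSpcfin)}{\Catob(\ProSpcfin)} $ and $ \fromto{\Catob(\ProSpcfin)}{\ProCatfin} $ followed by $ \Bprofin \colon \fromto{\ProCatfin}{\ProSpcfin} $. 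Since $ \Nerve(\Ccal) $ lies in the essential image of both fully faithful right adjoints $ \incto{\ProCatfin}{\Catob(\ProSpcfin)} $ and $ \incto{\Catob(\ProSpcfin)}{\Fun(\Deltaop,\ProSpcfin)} $, applying those two localizations returns $ \Ccal $, and then $ \Bprofin $ returns $ \Bprofin(\Ccal) $; hence $ \colim_{\Deltaop}\Nerve(\Ccal) \equivalent \Bprofin(\Ccal) $. Naturality in $ \Ccal $ comes for free, since the equivalence is assembled from (natural) units of localizations together with the (natural) factorization of $ \colim_{\Deltaop} $ as a composite of left adjoints.

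For the first equivalence I would run the identical argument along the bottom row, after one preliminary bookkeeping step. The simplicial protruncated space appearing in the first equivalence of the statement is $ \Nerve(\Ccal) $ composed with the inclusion $ \incto{\ProSpcfin}{\ProSpctrun} $; by the commutativity of the two rightmost squares of the diagram — whose downward vertical arrows are postcomposition with $ \incto{\ProSpcfin}{\ProSpctrun} $, resp.\ its restriction to category objects — this agrees with the image of $ \Ccal $, viewed as an object of $ \ProCattrun $ via $ \incto{\ProCatfin}{\ProCattrun} $, under the fully faithful functors $ \Nerve \colon \incto{\ProCattrun}{\Catob(\ProSpctrun)} $ and $ \incto{\Catob(\ProSpctrun)}{\Fun(\Deltaop,\ProSpctrun)} $. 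Again by the preceding observation, the long composite of the three right-to-left arrows of the bottom row is $ \colim_{\Deltaop} \colon \fromto{\Fun(\Deltaop,\ProSpctrun)}{\ProSpctrun} $, factoring through the two localizations down to $ \ProCattrun $ and then $ \Btrun $; since $ \Nerve(\Ccal) $ lies in the relevant essential images, this composite carries it to $ \Btrun(\Ccal) $, which gives the claimed equivalence.

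I expect the only real work to be the bookkeeping in the last paragraph: matching up the various fully faithful inclusions across the two rows of the diagram and keeping straight which ambient \category each colimit is computed in. The one input that is not pure diagram chasing is the identification of the long composite of left adjoints with the colimit functor, which is exactly the content recorded in the preceding observation — and which ultimately rests on the fact that $ \Bup $ is computed as a geometric realization (\Cref{rec:complete_Segal_spaces}).
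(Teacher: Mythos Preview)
Your proposal is correct and is precisely the argument the paper intends: the paper gives no separate proof of this corollary, instead deducing it directly from the preceding observation (the commutative diagram of adjunctions together with the identification of the long composite left adjoint with $\colim_{\Deltaop}$), and your write-up simply unpacks that deduction. The only content you add beyond the paper is making explicit why the two intermediate localizations act as the identity on $\Nerve(\Ccal)$ and the bookkeeping matching the two nerves across the rows, both of which are the routine details the paper leaves implicit.
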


\begin{example}\label{ex:protruncated_shapes_of_spectral_topoi_admit_profinite_resolutions}
	Let $ \X $ be a spectral \topos.
	Through \cite[Theorem 9.3.1]{arXiv:1807.03281}, Barwick--Glasman--Haine refined the the \category $ \Pt(\X) $ of points of $ \X $ to a profinite \category
	\begin{equation*}
		\Pihat_{(\infty,1)}(\X) \in \ProCatfin
	\end{equation*}
	called the \textit{stratified shape} of $ \X $.
	In \cite[Theorem 10.2.3]{arXiv:1807.03281} they show that there is a natural equivalence
	\begin{equation*}
		\equivto{\Shapetrun(\X)}{\Btrun\paren{\Pihat_{(\infty,1)}(\X)}} \period
	\end{equation*}
	That is, the protruncated shape of $ \X $ can be recovered as the protruncated classifying space of the stratified shape $ \Pihat_{(\infty,1)}(\X)  $.
	Hence \Cref{cor:presentation_of_classifying_protruncated_spaces_of_profinite_categories} shows that the protruncated shape $ \Shapeprotrun(\X) $ admits a natural $ \ProSpcfin $-resolution in the sense of \Cref{def:D-resolution}.
\end{example}

\begin{remark}
	Using the proétale topology \cite{MR3379634}, one can show that the protruncated étale homotopy type of a qcqs scheme admits a $ \Pro(\Setfin) $-resolution.

	To see this, first note that given a qcqs scheme $ X $, the pullback functor \smash{$ \nuupperstar \colon \fromto{X_{\et}}{X_{\proet}} $} from the étale $ \infty $-topos of $ X $ to the proétale $ \infty $-topos of $ X $ is fully faithful when restricted to truncated objects.
	(This observation generalizes \cite[Lemma 5.1.2 \& Corollary 5.1.6]{MR3379634}.)
	As a result, the protruncated shapes of $ X_{\et} $ and $ X_{\proet} $ are equivalent.
	Hence the protruncated étale homotopy type is a hypercomplete proétale cosheaf.
	
	Recall that the w-contractible affine schemes form a basis for the proétale topology; in particular, every qcqs scheme admits a proétale hypercover by w-contractible affine schemes.
	Moreover, for each w-contractible affine scheme $ U $, the prospace $ \Pietprotrun(U) $ is naturally identified with the profinite set $ \uppi_0(U) $ of connected components of $ U $.
	Hence the protruncated étale homotopy type
	\begin{equation*}
		\Pietprotrun \colon \fromto{\Schqcqs}{\ProSpctrun}
	\end{equation*}
	is the unique hypercomplete proétale cosheaf whose restriction to w-contractible affines is given by $ \goesto{U}{\uppi_0(U)} $.
	Given a qcqs scheme $ X $, choose a proétale hypercover $ U_{\bullet} $ of $ X $ be w-contractible affines.
	Then the simplicial object $ \uppi_0(U_{\bullet}) $ is a $ \Pro(\Setfin) $-resolution of $ \Pietprotrun(X) $.
\end{remark}


\DeclareFieldFormat{labelnumberwidth}{#1}
\printbibliography[keyword=alph, heading=references]
\DeclareFieldFormat{labelnumberwidth}{{#1\adddot\midsentence}}
\printbibliography[heading=none, notkeyword=alph]

\end{document}